\title{Sinkhorn-knopp balancing with transport constaints}
\date{}

\documentclass[twocolumn]{article}

\usepackage{url}
\usepackage{mathtools, cuted}
\usepackage{stfloats}
\usepackage{hyperref}
\usepackage{amsmath,amsfonts,amssymb}
\usepackage{bbm}
\usepackage{wrapfig}
\usepackage{draftwatermark}
\usepackage{algorithm}
\usepackage{algpseudocode}
\SetWatermarkText{}
\SetWatermarkScale{1}

\newtheorem{assumption}{Assumption}
\newtheorem{lemma}{Lemma}
\newtheorem{theorem}{Theorem}

\newtheorem{corollary}{Corollary}
\newtheorem{example}{Example}
\newtheorem{remark}{Remark}

\usepackage{bm}
\usepackage{mathptmx} 
\usepackage{boxedminipage}
\usepackage{graphicx}
\usepackage[T1]{fontenc}
\usepackage[english]{babel}
\definecolor{dgray}{gray}{0.6}
\definecolor{lgray}{gray}{0.8}
\newcommand{\ignore}[1]{}

\def\bbbz{{\mathchoice {\hbox{$\sf\textstyle Z\kern-0.4em Z$}}
{\hbox{$\sf\textstyle Z\kern-0.4em Z$}}
{\hbox{$\sf\scriptstyle Z\kern-0.3em Z$}}
{\hbox{$\sf\scriptscriptstyle Z\kern-0.2em Z$}}}}

\newcommand{\ba}{\begin{array}}
\newcommand{\ea}{\end{array}}
\newcommand{\beq}{\begin{equation}}
\newcommand{\eeq}{\end{equation}}
\newcommand{\beqy}{\begin{eqnarray}}
\newcommand{\eeqy}{\end{eqnarray}}
\newcommand{\beqyn}{\begin{eqnarray*}}
\newcommand{\eeqyn}{\end{eqnarray*}}
\newcommand{\bi}{\begin{itemize}}
\newcommand{\ei}{\end{itemize}}
\newcommand{\bseq}{\begin{subeqnarray}}
\newcommand{\eseq}{\end{subeqnarray}}

\newcommand{\bex}{\begin{example}}
\newcommand{\eex}{\end{example}}
\newcommand{\bexer}{\begin{exercise}}
\newcommand{\eexer}{\end{exercise}}
\newcommand{\bmct}{\begin{fact}}
\newcommand{\efct}{\end{fact}}

\def\qed{ \rule{.1in}{.1in}}

\usepackage{etoolbox}
\usepackage{authblk}

\patchcmd{\IEEEproofindentspace}{2\parindent}{-2pt}{}{}

\usepackage{caption}
\usepackage{subcaption}

\begin{document}

\title{Sinkhorn-Knopp balancing with generalised martingale-type constraints}

\author[1]{Abigail Langbridge}
\author[2]{Martin Corless}
\author[1]{Robert Shorten}
\affil[1]{Dyson School of Design Engineering, Imperial College London, UK}
\affil[2]{School  of Astronautics and Aeronautics, Purdue University, West Lafayette, Indiana, USA}


\maketitle

\begin{abstract}
We consider the problem of optimally distributing resources from a set of suppliers to a set of consumers in the presence of general transportation constraints, and including heterogeneous flexibility in the marginal constraints. Such problems frequently arise in a variety of practical settings; for example, in the context of sharing economy applications, where one is not only interested in the transportation plan, but also its realisation, and in other problems that involve the study of martingales. Our principal contribution in this paper is to consider a generalisation of the classic entropically regularised Optimal Transport formulation in which such problems can be solved with a Sinkhorn algorithm.
In particular, we present provably convergent Sinkhorn-like algorithms for solving this class of problems, and provide examples to both illustrate the utility of our approach as well as its efficacy.
\end{abstract}

\section{Introduction}

In this paper we consider the problem of optimally distributing resources from a set of suppliers to a set of consumers in the presence of heterogeneous marginal and transportation constraints. Such problems arise in classical economic settings; in particular, in sharing economy settings \cite{shorten}, when suppliers, consumers, and agents cooperate to solve a complex resource reallocation task (for example, in the gig economy). In problems of this type, a typical goal is to reallocate a resource in an efficient manner, subject to individual agents meeting some minimum income requirement. A contemporary and very important example where such a problem arises is in the realisation of equality of opportunity constraints across the agent sub-groups tasked to realise the transportation plan. More specifically, consider the problem of allocating passengers to drivers for on-demand ride hailing. As studied in \cite{cook2021gender}, a gender pay gap persists in this type of gig work despite its inherent flexibility. One key contribution to the pay gap is the location of work, with male drivers more likely to live and work closer to lucrative locations, such as city centres and high-crime neighbourhoods. To overcome this, we propose an optimal transport method which admits a novel constraint on the coupling between passengers and drivers to equalise the total utility of male and female drivers in the system, while minimising the waiting time for passengers. Our approach also accounts for unbalanced couplings, where there may be more drivers than passengers, and can be solved efficiently using Bregman projections.

Optimal transport (OT) is a fundamental methodological framework underpinning a wide variety of problems in economics, control and in machine learning \cite{gal,gan2023optimal,montesuma2024recent}. 
Notable recent successes of the technology have been reported in domain adaptation and transfer learning applications \cite{pey,kolkin2019style,peterson2021transfer}
and in the related areas of correcting and detecting algorithmic bias in machine learning algorithms \cite{gor,abi,quan}. Despite this recent interest, the field of Optimal Transport (OT) is an old topic, dating back to the the work of Monge in the 18th century \cite{monge1781memoire}, and more recently to the work of the Russian mathematician Kantorovich in the 20th century \cite{kantorovich1942translocation}.

The classical formulation of OT refers to a class of problems that describe the transportation of a finite resource from a bunch of sources  to a bunch of targets in an optimal manner. In the discrete setting, the solution of this problem gives rise to a linear program, which is known to scale poorly. Consequently, much recent work in OT has focussed on the development of efficient algorithms for solving an appropriately strictly {convexified} (but approximate) OT problem. The celebrated Sinkhorn-Knopp (SK) algorithm has arisen in this context and is obtained by adding a so-called entropic regularisation term to the OT problem, thereby realising a strictly convex cost for which efficient algorithms can be used \cite{Cut1,Bregman1967}.

This formulation can be extended with additional constraints to permit more expressive OT formulations including 
multi-marginal OT problems \cite{gangbo1998optimal,pass2015multi},
problems where the optimal couplings are martingales \cite{galichon2014stochastic,guo2019computational}, and those with bespoke constraints such as the sparsity constraint proposed by \cite{liu2022sparsity} or the zero-transport entries in \cite{martin}. However, many of these works are not provably convergent, and they require the marginal constraints to be strictly enforced. Therefore they are not appropriate for solving \textit{unbalanced} \cite{benamou2003numerical,chizat2018unbalanced} OT problems, where the marginals may not be of equal mass, or not all mass need be transported. 
Our interest is to extend this recent line of work to situations where more general heterogenous constraints are enforced on the marginals, and on the transportation plan itself. Such problems arise in sharing economy applications where suppliers and consumers have differing contracts, and where the realisation of the transportation plan itself gives rise to constraints.

In this work, we present a generalised, provably convergent algorithm for solving constrained optimal transport problems. We show that our approach is robust to selectively- and completely-relaxed marginal constraints, as well as general linear constraints on the coupling. In Section \ref{sec:applications}, we explore the application of this algorithm to the fair allocation of work in a ridesharing example, and the allocation of insufficient supply in an energy prosumer market, where some consumers are flexible.

\subsection{Specific contributions}
The work presented in this paper significantly extends previous work in several regards. Specifically, the novel contributions of our work as follows:
\begin{itemize}
    \item We develop provably convergent algorithms for solving optimal transport problems with arbitrary linear constraints, including hard zero constraints on elements of the plan.
    \item We also allow for arbitrary compliance and non-compliance with the marginal constraints on the transport plan to model flexibility in the supply and/or demand.
    \item We show that when the optimal plan is on the constraint boundary, all allowable plans must also be on the same boundary.
    \item We illustrate the application of this approach to several resource allocation problems.
\end{itemize}

\section{Relaxed OT with transportation constraints}
\label{sec:OTzero}
Our objective in this section is to formulate a generalized  OT problem with a general set of constraints. In doing this we formulate an OT problem to achieve the following:
\begin{itemize}
\item[1] To constrain individual elements of the transportation plan to be zero, for example to forbid certain supplier-consumer relationships or to reflect missing infrastructure;

\item[2] To encode some \textit{ideal} transport plan, through which the problem can be regularised;

\item[3] To formulate a set of general constraints on the transportation plan. These constraints allow several interesting case studies to be explored, including equalising opportunity for subgroups charged with realising the plan, or allowing heterogeneous flexibility in resource allocation.

\end{itemize}

We proceed as follows. Consider an optimal transport (OT) problem in which there are a finite number, $m$, of {\em source agents}  (that is, source states), $i=1,2, \dots, m$,  and a finite number, $n$, of {\em target agents} (that is,\ target states),  $j= 1,2, \dots, n$.
Suppose each source agent, $i$, has {\em capacity} or {\em mass},  $\tilde{u}_i  >0$,  and each target agent, $j$, has a desired capacity or mass, $\tilde{v} _j >0$.
We wish to transport the masses of the source agents to the target agents.
 {\em Let
$
t_{ij} \ge 0
$
be the mass or capacity that source agent, $i$,  transports to  target agent, $j$.}
We will refer to  the $m \times n$ matrix, $T=\left\{t_{ij}\right\}$, as the {\em transport plan} or {\em transport map}.

Suppose that  there is a cost, $c_{ij} \in \mathbb{R}$, of moving a unit mass from source, $i$, to
target,  $j$.
Then, the total cost of transport is
\begin{equation}
\label{eq:cost0}
\sum_i^n \sum_{j}^m c_{ij} \; t_{ij} 
\end{equation}

In the {\em standard OT problem}, each source agent transports all of its capacity, that is,
\begin{align}
\label{eq:rowconstraints}
\sum_{j=1}^nt_{ij}  &= \tilde{u}_i  \qquad i=1, 2, \dots, m 
\end{align}
or, $T {\bm 1} = \tilde{u}$, where   ${\bm 1}$ is a vector of ones of appropriate length.
Also,
each target agent receives its desired capacity, that is,
\begin{align}
\label{eq:colconstraints}
\sum_{i=1}^mt_{ij}  &= \tilde{v}_j  \qquad j=1, 2, \dots, n
\end{align}
or, $T' {\bm 1} = \tilde{v}$.
When constraints   \eqref{eq:rowconstraints}  and \eqref{eq:colconstraints} are satisfied we have 
\[
\sum_{i=1}^m\sum_{j=1}^n t_{ij} =\sum_{i=1}^m \tilde{u}_i = \sum_{j=1}^n \tilde{v}_j
\]
that is, 
\begin{equation}
{\bm 1}'\tilde{u} = {\bm 1}'\tilde{v}
\label{eq:bal}
\end{equation}
Problems in which \eqref{eq:bal} is satisfied are called {\em balanced} OT problems.
In {\em  unbalanced OT (UOT)} \eqref{eq:bal}  is not satisfied; hence one cannot  simultaneously satisfy  \eqref{eq:rowconstraints}  and \eqref{eq:colconstraints}.

\subsection{Encoding zero constraints}
Now suppose that a  specific source agent, $i$, can never  transport to a specific target agent, $j$, so that 
 $t_{ij}= 0$.
We let $\mathcal {Z}$ be the complement of the  set of these prior non-transporting  index pairs, that is,
\begin{equation}
\label{eq:U}
t_{ij}= 0  \text{ if } (i ,j) \notin \mathcal {Z} 
\end{equation}
Then the set of allowable transport plans is given by
\begin{align}
\mathcal{T} \equiv \left\{ T \in \mathbb{R}^{m\times n} :  \begin{array}{ll} t_{ij}  \ge 0 & \text{if }  (i,j) \in  \mathcal{Z} \\ 
t_{ij} = 0 & \text{if }   (i,j) \notin  \mathcal{Z} \end{array}  \right\}
\label{eq:Tset}
\end{align}
We assume that for every $i$, that there is at least one $(i, j) \in \mathcal{Z}$
and for every $j$, there is also at least one $(i, j) \in \mathcal{Z}$;
that is  if 
$t_{ij} >0$ for all $(i, j) \in \mathcal{Z}$
then, {\em $T$ has   no row or column constrained to be all zeros}.

If every source agent can transport to every target agent,  and the balanced condition \eqref{eq:bal} holds, then the marginal constraints \eqref{eq:rowconstraints} and \eqref{eq:colconstraints} are feasible (that is,  there is at least one solution in $\mathcal{T}$, for example, 
$T \equiv  \tilde{u}\tilde{v}'/{\bm 1}'\tilde{v}$).
 However, if some  source agent cannot transport to some target agent
 then, \eqref{eq:rowconstraints} and \eqref{eq:colconstraints}  may not be both feasible (that is, there may not be any solution in $\mathcal{T}$). A simple example to illustrate this is given  by  $m=n=2$ and  $\mathcal{Z} = \{(1,2),(2,1)\}$: 
\begin{equation}
T=\left(\begin{array}{cc}
0 & t_{12} \\
 t_{21} & 0
\end{array}
\right)
\qquad  \tilde{u} = 
 \tilde{v} =\left(\begin{array}{c} 2\\ 1
\end{array}
\right)
\label{eq:exsimple}
\end{equation}

\subsection{Encoding an ideal (reference) plan}
Suppose that it is desirable for $T$ to be close to some {\em ideal (desired) transport plan}, $\tilde{T}$
with 
 $\tilde{t}_{ij} >0$ for $(i,j) \in \mathcal{Z}$.
We therefore  modify 
the cost \eqref{eq:cost0} to
\begin{equation}
\label{eq:newJ}
\sum_{(i,j) \in \mathcal{Z}} c _{ij} t_{ij} +\gamma_0 KL(T|\tilde{T})
\end{equation}
where  $\gamma_0 >0$ is a {pre-assigned  constant, whose size reflects the importance of the transport plan being close to the ideal plan}, and the {\em Kullback-Leibler ($KL$) divergence} of $T$ from $\tilde{T}$ is defined as
\begin{align}
KL(T| \tilde{T}) \equiv   \sum_{(i,j) \in \mathcal{Z}}  kl(t_{ij}|\tilde{t}_{ij})	
\label{eq:KL}
\end{align}
where,  for any two scalars, $t\ge 0$ and $\tilde{t}>0$, we define
\begin{equation}
kl(t|\tilde{t}) \equiv
\left\{\begin{array}{ccc}
\tilde{t}	&\text{if}	& t = 0	\\
\displaystyle{ t \log \left(\frac{t}{\tilde{t}}\right)} -t + \tilde{t}	& \text{if}	& t >0
\end{array}
\right.
\end{equation}
One may readily show that,  for all $t\ge0$ and $\tilde{t}>0$,  $kl(t|\tilde{t}) \ge 0$; also $kl(t|\tilde{t}) =0$ iff  $t=\tilde{t}$. Hence, for all $T\in \mathcal{T}$,  $KL(T|\tilde{T}) \ge 0$ and $KL(T|\tilde{T}) =0$ iff $T=\tilde{T}$.

\begin{remark}
The problem of minimizing \eqref{eq:cost0} subject to linear constraints is a linear programming problem and is computationally expensive for large problems. So a common approach is to add a regularizing term of the form $\gamma_0 KL(T|\tilde{T})$ 
with $\gamma_0>0$ small to obtain a less computationally burdensome problem.
\end{remark}

Cost \eqref{eq:newJ} can be expressed as 
$
\gamma_0 KL(T|K) + \sum_{(i,j) \in \mathcal{Z}} \gamma_0 (\tilde{t}_{ij} - k_{ij})
$
where
\begin{align}
k_{ij} & \equiv \left\{
\begin{array}{ccl}
\tilde{t}_{ij}\exp(-c_{ij}/\gamma_0) \qquad & \text{if} \qquad & (i,j) \in \mathcal{Z}	\\
0	\qquad & \text{if} \qquad & (i,j)  \notin \mathcal{Z}
\end{array}
\right.	
\label{eq:K}
\end{align} 
Therefore,  we replace cost \eqref{eq:newJ} with
$KL(T|K)$

\subsection{Encoding generalised transportation constraints}
Consider the regularised OT problem with linear constraints:
\begin{equation}
\label{eq:GenOpt2}
\fbox{$\displaystyle
\min_{T \in \mathcal{T}} L(T)\quad  \text{subject to} \quad \eqref{eq:GenConst} 
$}
\end{equation}
\begin{equation}
L(T) =KL(T|K) + \sum_{l=1}^{N_1} \gamma_l kl\left( \sum_{(i,j) \in {\mathcal Z}} 
\left. a^l_{ij} t_{ij}\right|\tilde{b}^l  \right)
\end{equation}
\begin{equation}
\label{eq:GenConst}
\sum_{(i,j) \in \mathcal Z} a^l_{ij}t_{ij} = \tilde{b}^l
\qquad l=N_1+1, N_1+2, \cdots, N
\end{equation}
where
$a^l_{ij} \ge 0$, and  $\tilde{b}^l >0$ for $l=1, 2, \cdots, N_1$.

Note that this formulation encompasses several interesting classes of constraints, including: capacity constraints on individual elements of the plan $t_{ij}$; constraints on the total mass supplied or consumed under the plan, and; \textit{martingale constraints} which constrain the conditional expectation of the target and are popular in financial modelling \cite{galichon2014stochastic}.
%
%
The martingale property can be formulated as
\begin{equation}
\sum_{j=1}^nt_{ij}y_j =  \sum_{j=1}^nt_{ij} x_i \qquad i =1,2, \dots, m
\end{equation}
    for some $x_i, y_j \in \mathbb{R}$.
    This can be expressed as
\begin{equation}
\sum_{j=1}^n(y_j-x_i)t_{ij} =0 \qquad i =1,2, \dots, m
\label{eq:martingale_constraint}
\end{equation}
which is of the form of \eqref{eq:GenConst}. 


\subsubsection{A special case}
Here we consider OT problems in which {\em some of the row constraints \eqref{eq:rowconstraints} are satisfied}, that is,
\begin{align}
\label{eq:rowconstraints1}
\sum_{j=1}^nt_{ij}  = \tilde{u}_i  \qquad i \in \mathcal{I}
\end{align}
where $\mathcal{I}$ is a  subset of $\{1,2,\dots, m\}$ and
 {\em some of  the column constraints \eqref{eq:colconstraints} are satisfied}, that is,
\begin{align}
\label{eq:colconstraints1}
\sum_{i=1}^m t_{ij} = \tilde{v}_j  \qquad j \in \mathcal{J} 
\end{align}
where $\mathcal{J}$ is a  subset of $\{1,2,\dots, n\}$.
This is a generalization of the situation considered in \cite{martin}; there all the row constraints were enforced and none of the column constraints were enforced.

We  now add KL-type penalty terms to the cost function, 
which are measures of non-compliance with the standard  row and column constraints \eqref{eq:rowconstraints} and \eqref{eq:colconstraints}.
We therefore consider a  cost function given by
\begin{equation}
\label{eq:lt}
\begin{aligned}
L(T) &= KL(T|K)
+ \sum_{i \notin \mathcal{I}}  \gamma_{1i} kl\left(\left. \sum_{j=1}^n  t_{ij} \right| \tilde{u}_i\right) \nonumber\\
&+ \sum_{j \notin \mathcal{J}} \gamma_{2j} kl\left(\left. \sum_{i=1}^m  t_{ij} \right| \tilde{v}_j \right)
\end{aligned} 
\end{equation}
where 
$\gamma_{1i}, \gamma_{2j}>0$.
Since we do not necessarily enforce all the standard row and  column constraints, \eqref{eq:rowconstraints} and \eqref{eq:colconstraints},  
we need $\gamma _{1i}, \gamma_{2j}>0$ to penalize non-enforcement of these constraints. 
So here we consider the following problem:
\begin{equation}
\label{eq:GenOpt}
\fbox{$\displaystyle
\min_{T \in \mathcal{T}} L(T)\quad  \text{subject to} \quad \eqref{eq:rowconstraints1} \text{ and } \eqref{eq:colconstraints1}
$}
\end{equation}
Clearly this is a special case of the optimization problem \eqref{eq:GenOpt2}.


\section{Algorithms to solve  OT problems }
\label{sec:algore's_favorite_dance}
We now present the main results of the paper. These results yield iterative scaling algorithms that produce a sequence, $\{T(k)\}$, converging to the optimal transport plan, $T^*$, for 
\eqref{eq:GenOpt2}. 
In particular, two algorithms are presented. Algorithm 1 is obtained using results in \cite{Bregman1967}.
In order to obtain an algorithm which is a generalization of the Sinkhorn-Knopp algorithm, we introduce new parameters to obtain Algorithm 2.
Algorithms 3 and 4 specialise Algorithms 1 and 2 to the OT problem with relaxed marginal constraints defined in \eqref{eq:GenOpt}.

{\bf Algorithm 1.}
Initialize with 
\begin{equation*}
   t_{ij}(0) = k_{ij}\quad (i,j) \in {\mathcal Z} \qquad b^l(0) = \tilde{b}^l, \;\; l=1,2,\cdots,  N_1
\end{equation*} 
Iterate  for $k= 0, 1 \dots:$

Let $l = k\!\bmod N +1$.
\newline
If $1 \le l \le N_1$,
let
\begin{equation}
\begin{aligned}
\label{eq:tij(k+1)1}
t_{ij}(k+1) &= c_{l}(k+1)^{a^l_{ij}}  t_{ij}(k) \quad (i, j) \in {\mathcal Z}\\
b^l(k+1) &= c_{l}(k+1)^{-1/\gamma_l} b^l(k)
\end{aligned}
\end{equation}
where $c_{l}(k+1) >0$ satisfies
\begin{equation}
\label{eq:cl(k+1)1}
 \sum_{(i,j) \in {\mathcal Z}} a^l_{ij} t_{ij}(k) c_{l}(k+1)^{a^l_{ij}}   -b^l(k)c_{l}(k+1)^{-1/\gamma_l}  =0
\end{equation}
If  $l=N_1+1, N_1+2, ,\dots,  N$, let
\begin{equation}
\label{eq:tij(k+1)2}
t_{ij}(k+1) = c_{l}(k+1)^{a^l_{ij}}  t_{ij}(k) \quad (i, j) \in {\mathcal Z}\\
\end{equation}
where $c_{l}(k+1) >0$ satisfies
\begin{equation}
\label{eq:cl(k+1)2}
 \sum_{(i,j) \in {\mathcal Z}} a^l_{ij}  t_{ij}(k) c_{l}(k+1)^{a^l_{ij}}  = \tilde{b}^l
\end{equation}



%

The following theorem provides the  first main result of this paper. A proof is provided in Section \ref{sec:Proof}.
\begin{theorem}
\label{th:Genal1}
Suppose the optimization problem \eqref{eq:GenOpt2} is feasible, 
Then the sequence $\{T(k), b(k)\}$  generated by Algorithm 1 converges to a limit, $(T^*, b^*)$, 
and $T^*$ is a minimizer for  the optimization problem   given by \eqref{eq:GenOpt2}.
\end{theorem}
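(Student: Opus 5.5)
Our plan is to recast \eqref{eq:GenOpt2} as a single Bregman (KL) projection problem in an enlarged space, and then identify Algorithm 1 as Bregman's cyclic projection method \cite{Bregman1967} onto a finite family of hyperplanes. To this end we introduce auxiliary variables $b=(b^1,\dots,b^{N_1})$, with $b^l>0$, and the lifted objective
\[
\Phi(T,b) = KL(T|K) + \sum_{l=1}^{N_1}\gamma_l\, kl(b^l|\tilde{b}^l).
\]
We then consider minimising $\Phi$ over $T\in\mathcal{T}$ subject to the linear constraints
\[
\sum_{(i,j)\in\mathcal{Z}} a^l_{ij}t_{ij} = b^l \quad (1\le l\le N_1),
\qquad
\sum_{(i,j)\in\mathcal{Z}} a^l_{ij}t_{ij} = \tilde{b}^l \quad (N_1< l\le N).
\]
Eliminating $b$ shows that $(T^*,b^*)$ solves the lifted problem iff $T^*$ solves \eqref{eq:GenOpt2} with $b^{l*}=\sum a^l_{ij}t^*_{ij}$. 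Feasibility of \eqref{eq:GenOpt2}, together with the fact that $\Phi$ is finite on the relevant set, gives a nonempty feasible set. The initial point $(K,\tilde{b})$ is the unconstrained minimiser of $\Phi$, that is, the point where the gradient of the generating function $\Phi$ vanishes. This is exactly the starting condition required by Bregman's theorem for convergence to the projection of $(K,\tilde b)$ onto the intersection.

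Next we verify that each step of Algorithm 1 is the $\Phi$-Bregman projection onto the $l$-th hyperplane. For $1\le l\le N_1$ we write the Lagrangian of minimising the Bregman divergence $D_\Phi\bigl((T,b),(T(k),b(k))\bigr)$ subject to $\sum a^l_{ij}t_{ij}-b^l=0$. Stationarity gives $\log(t_{ij}/t_{ij}(k)) = \lambda a^l_{ij}$ and $\gamma_l\log(b^l/b^l(k)) = -\lambda$. Setting $c_l=e^{\lambda}$ recovers \eqref{eq:tij(k+1)1}, and substituting into the constraint recovers \eqref{eq:cl(k+1)1}. For $l>N_1$ the same computation without $b$ gives \eqref{eq:tij(k+1)2}--\eqref{eq:cl(k+1)2}. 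We must show that the scalar equation has a unique positive root. For \eqref{eq:cl(k+1)1}, the function $c\mapsto\sum a^l_{ij}t_{ij}(k)c^{a^l_{ij}} - b^l(k)c^{-1/\gamma_l}$ is strictly increasing, tends to $-\infty$ as $c\to0^+$, and tends to a positive limit as $c\to\infty$ provided some $a^l_{ij}>0$ on $\mathcal{Z}$. For \eqref{eq:cl(k+1)2}, feasibility together with positivity of $t_{ij}(k)$ on $\mathcal{Z}$ (preserved by the multiplicative updates) gives a root by monotonicity and the intermediate value theorem. Here we will need $a^l_{ij}\ge0$, which the paper assumes, and $\tilde b^l>0$.

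We then invoke Bregman's convergence theorem for cyclic projections onto finitely many hyperplanes (more precisely, its version for affine constraints with the Legendre function $\Phi$). Because the generating function is separable entropy, the relevant zone and the strong convexity-type conditions hold: the iterates stay in the positive orthant on $\mathcal{Z}$, and $\Phi$-level sets are bounded. Consequently $(T(k),b(k))$ converges to the Bregman projection of $(K,\tilde b)$ onto the feasible intersection, which minimises $\Phi$ there. By the elimination argument above, $T^*$ minimises \eqref{eq:GenOpt2}.

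We expect the main obstacle to be the boundary issue. The optimal $T^*$ may have zero entries on $\mathcal{Z}$ (as in the example \eqref{eq:exsimple}), so it need not lie in the interior domain where Bregman's hypotheses are usually stated. To handle this we would work directly with Bregman's original conditions, which only require the projections to exist in the domain. We would then establish boundedness of the iterates via the monotone decrease $D_\Phi(z^*,z(k+1))\le D_\Phi(z^*,z(k))$ for a feasible point $z^*$, with the projection identity holding for any feasible $z^*$, including boundary points, since $kl$ is continuous at $0$. Next we would show $D_\Phi(z(k+1),z(k))\to0$ and extract a convergent subsequence whose limit is feasible. The limit inherits the optimality conditions, namely the dual representation $t_{ij}=k_{ij}\exp\bigl(\sum_l\lambda_l a^l_{ij}\bigr)$ in the limit, to be checked through a KKT/Pythagorean argument. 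Finally, monotonicity of $D_\Phi(z^*,\cdot)$ for the limit point upgrades subsequential convergence to convergence of the whole sequence.
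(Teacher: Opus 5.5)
Your proposal is correct and takes the paper's route: you lift to $(T,b)$ with cost \eqref{eq:J(T,v)2} and constraints \eqref{eq:GenConst3}--\eqref{eq:GenConst4}, show that each step of Algorithm 1 is the single-constraint KL projection of Lemma \ref{lem:lembasic} started at the unconstrained minimiser $(K,\tilde b)$, and invoke Bregman's cyclic-projection theorem (Theorem \ref{th:Bregman}), while your root-existence check and boundary discussion spell out what the paper leaves to Assumption \ref{ass1} and to Bregman's result. One correction: the paper assumes $a^l_{ij}\ge 0$ and $\tilde b^l>0$ only for $l\le N_1$, and for $l>N_1$ you do not need them. The map $c\mapsto\sum a^l_{ij}t_{ij}(k)c^{a^l_{ij}}$ has derivative $\sum (a^l_{ij})^2t_{ij}(k)c^{a^l_{ij}-1}>0$ for coefficients of any sign, so \eqref{eq:cl(k+1)2} has a unique positive root whenever the coefficients on $\mathcal{Z}$ take both signs (as in the martingale constraint \eqref{eq:martingale_constraint}) or all share the strict sign of $\tilde b^l$.
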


{In order to obtain an algorithm for comparison with the Sinkhorn-Knopp algorithm,} we  introduce scaling parameters,
\begin{align*}
d_l(0) &= 1 	\qquad \qquad \qquad l=1,2,\dots, N\\
\quad  d_l(k+1) &=c_l(k+1)d_l(k) \quad  l=l_k  \\
\quad  d_l(k+1) &=d_l(k)  \, \, \qquad \qquad l=1,2,\dots, N,  l\neq l_k
\end{align*}

{\bf Algorithm 2.}
Initialize with
 \[
d_l(0) = 1\qquad l=1,2,\dots, N 
\]
Iterate for $k=0,1, \cdots$:

With  $l_k = k\bmod N +1$, let
\begin{equation}
\label{eq:dlkp1}
d_l(k+1) =d_l(k)  \qquad  l=1,2,\dots, N \qquad  l\neq l_k
    \end{equation}
 If $1 \le l_k \le N_1$, for $l=l_k$,
solve
\begin{equation}
\begin{aligned}
\label{eq:dlGen}
 &\sum_{(i,j) \in {\mathcal Z}} {a^l_{ij}}k_{ij}   \prod_{s=l, s\neq l}^Nd_{l}(k)^{a^s_{ij}} d_{l}(k+1)^{-1/\gamma_l} 
 - \tilde{b}^l   d_{l}(k+1)^{a^l_{ij}} =0
 \end{aligned}
\end{equation}
for $d^l(k+1)$.

If $N_1+1 \le l_k \le N$, for $l=l_k$,
solve
\begin{equation}
\begin{aligned}
\label{eq:dlGen}
 &\sum_{(i,j) \in {\mathcal Z}} {a^l_{ij}}k_{ij}   \prod_{s=l, s\neq l}^Nd_{l}(k)^{a^s_{ij}} d_{l}(k+1)^{-1/\gamma_l} 
 = \tilde{b}^l   
 \end{aligned}
\end{equation}
for $d_l(k+1)$.

Let
\begin{equation}
\begin{aligned}
t_{ij}(k+1) = \prod_{l=1}^Nd_{l}(k+1)^{a^l_{ij}} k_{ij}  \\
\end{aligned}
\end{equation}

Using Algorithm 1, we obtain an algorithm for the problem in \eqref{eq:GenOpt};
the details are in Section \ref{sec:3from1}.

{\bf Algorithm 3.}
Initialize
 \[
 T(0)= K, \; u(0) = \tilde{u}, \; \,v(0) = \tilde{v}
 \]
Iterate for $k=0,1, \dots$,
 \begin{equation}
 \begin{aligned}
c_{1i}(k+1) 	&=  \frac{\tilde{u}_i}{\Sigma_{j=1}^n t_{ij}(k)}	\qquad \qquad \qquad i \in \mathcal{I}	\label{eq:c_1Iterate}	\\
 c_{1i}(k+1)  &= \left(\frac{u_i(k)}{\Sigma_{j=1}^n t_{ij}(k)}\right)^{\frac{\gamma_{1i} }{1+\gamma_{1i}}}	
 \qquad i \notin \mathcal{I}\\
 u_{i}(k+1) &= c_{1i}(k+1)^{-1/\gamma_{1i}} u_i(k)
   \qquad i \notin \mathcal{I} \\
\\
 c_{2j}(k+1) 	&=  \frac{\tilde{v}_j}{\Sigma_{i=1}^m c_{1i}(k+1) t_{ij}(k)}	
  \qquad j \in \mathcal{J} \\
 c_{2j}(k+1)  &= \left(\frac{v_j(k)}{\Sigma_{i=1}^m c_{1i}(k+1)t_{ij}(k)}\right)^{\frac{\gamma_{2j} }{1+\gamma_{2j}}}
   \qquad  j \notin \mathcal{J} \\
  v_{j}(k+1) &= c_{2j}(k+1)^{-1/\gamma_{2j}} v_j(k)	
   \qquad j \notin \mathcal{J} \\
  \\
   t_{ij}(k+1)   &= c_{1i}(k+1)t_{ij}(k)c_{2j}(k+1)	\qquad (i,j) \in \mathcal{Z}
 \end{aligned}
 \end{equation}


\ignore{
\begin{remark}
Since 
\begin{align}
\label{eq:row*constraint}
    \Sigma_{j=1}^n t^*_{ij} =\tilde{u}_i,
\end{align}
 from  \eqref{eq:c_1Iterate},
$
\lim_{l\rightarrow \infty}c_{1i}(l) = 1,
$
  for all $i$.
  In Lemma \ref{lem:lem1b} (see Section~\ref{sec:propmin}), it will be shown that $t^*_{ij} >0$ for all $(i,j) \notin \mathcal{Z}$.
Hence, $T^*$ is guaranteed  not to contain any row of zeroes, and it follows from \eqref{eq:t_ijIterate} that
$
     \lim_{l\rightarrow \infty}c_{2j}(l) = 1,
$
  for all $j$.
  From \eqref{eq:c_2Iterate}, we see that
  \begin{align}
  \label{eq:col*constraint}
   \Sigma_{i=1}^m t^*_{ij}  = v^*_j,
  \end{align}
  and, since $T^*$ does not contain any column of zeroes, we must have
   $   v^*_j >0$,
   $\forall j$.
It now follows, from \eqref{eq:row*constraint} and \eqref{eq:col*constraint},
that
$
\sum_{j=1}^n v_j^* = \sum_{i=1}^m \tilde{u}_i
$
and so $v^*$ and $\tilde{u}$ are {\em balanced } (i.e. mass-conserving).  
The above result holds even in the {\em unbalanced} (i.e.\ non-mass-conserving) problem,
$
\sum_{i=1}^m \tilde{u}_i \neq \sum_{j=1}^n \tilde{v}_j
\label{eq:nonmc}
$.
{In this case, application of the SK algorithm
produces a
 sequence, $\{T(l)\}$, which  has two convergent subsequences with different limits \cite{BaradatAl2022}.}
\end{remark}
}

Here, we introduce two scaling parameters,
 $d_{1i}(k)$ and $d_{2j}(k)$, defined by
\begin{align}
d_{1i}(k+1) &\equiv c_{1i}(k+1)d_{1i}(k) \quad d_{1i}(1) = c_{i1}(1)	\\
d_{2j}(k+1) &\equiv c_{2j}(k+1)d_{2j}(k) \quad d_{2j}(0) = 1
\end{align}
Then
\begin{align}
   u_{i}(k+1) &= d_{1i}(k+1)^{-1/\gamma_{1i}} \tilde{u}_i  \quad\qquad i \notin \mathcal{I}	\\
  v_{j}(k+1) &= d_{2j}(k+1)^{-1/\gamma_{2j}} \tilde{v}_j 	\quad \qquad  j \notin \mathcal{J}	
    \label{eq:v_jIterate2}\\
    t_{ij}(k+1)   &= d_{1i}(k+1)t_{ij}(k)d_{2j}(k+1)	\;\; \,(i,j) \in \mathcal{Z}
 \label{eq:t_ijIterate2} 
\end{align}
and the sequence, $\{T(k)\}$, obtained from Algorithm 3, can then also be obtained from the following algorithm.\\


 {\bf Algorithm 4.}
 Initialize
 \[
  d_{2j}(0) = 1
\]
 Iterate for $k= 0,1, \dots$, 
 \begin{equation}
\begin{aligned}
 d_{1i}(k+1)  &=  \frac{\tilde{u}_i}{\Sigma_{j=1}^n k_{ij}d_{2j}(k)}\qquad \qquad i \in \mathcal{I}			
	\\
 d_{1i}(k+1) &= \left(\frac{\tilde{u}_i}{\Sigma_{i=1}^n k_{ij}d_{2i}(k)}\right)^\frac{\gamma_{1i}}{1+\gamma_{1i}}
\qquad i \notin \mathcal{I}
	\\
	d_{2j}(k+1) &= \frac{\tilde{v}_j}{\Sigma_{i=1}^m d_{1i}(k+1)k_{ij}}	\qquad \qquad j \in \mathcal{J}	\\
d_{2j}(k+1) &= \left(\frac{\tilde{v}_j}{\Sigma_{i=1}^m d_{1i}(k+1)k_{ij}}\right)^\frac{\gamma_{2j}}{1+\gamma_{2j}}
 \quad	j \notin \mathcal{J}\\
\\
 t_{ij}(l+1) &= d_{1i}(k+1)k_{ij}d_{2j}(k+1)\qquad (i,j) \in \mathcal{Z}
 \end{aligned}
 \end{equation}


\section{Interior and non-interior optimizers}

\subsection{A consequence of a non-interior optimizer}
In much of the OT problems previously considered in the literature, the  minimizer $T^*$ is in the interior of ${\mathcal T}$.
However, when one has hard zero constraints of some of the elements of $T$,
it is possible that the minimizer $T^*$ is not in the interior of ${\mathcal T}$, that is, $t^*_{ij} =0$ for some $(i,j) \notin {\mathcal Z}$.
To see this, consider the original OT  problem in
\eqref{eq:GenOpt} with 
\begin{equation}
T=\left(\begin{array}{cc}
t_{11} & t_{12}\\
t_{21}& 0
\end{array}
\right)
\quad \mbox{with} \quad  \tilde{u} = \tilde{v} =\left(\begin{array}{c} 1\\ 1
\end{array}
\right)
\label{eq:exsimple2}
\end{equation}
and ${\mathcal I}= {\mathcal J} = \{1, 2\}$.
There is only one matrix
\[
\left(\begin{array}{cc}
0& 1\\
1& 0
\end{array}
\right)
\]
satisfying constraints  \eqref{eq:rowconstraints1} and \eqref{eq:colconstraints1}, hence it is optimal. Here $t^*_{11} = 0$.

Considering the general OT problem in \eqref{eq:GenOpt2}, we will show that, if for any $(i,j) \in {\mathcal Z}$ there is a matrix $T$ satisfying the constraints 
\eqref{eq:GenConst}
with $t_{ij} >0$ then $t^*_{ij}>0$, or equivalently, if $t^*_{ij}=0$ then $t_{ij} = 0$ for all matrices satisfying the constraints.

First, we obtain a more general result. To this end, let
\[
\mathcal{X} = \mathbb{R}^q_{++} =\left\{ x \in \mathbb{R}^q: x_i >0 \text{ for } i =1,2, \dots, q \right\}
\]
Then, the closure of ${\mathcal X}$ is
\[
\bar{\mathcal X} = \mathbb{R}^q_{+} =\left\{ x \in \mathbb{R}^q: x_i \ge0 \text{ for } i =1,2, \dots, q \right\}
\]
\label{sec:propmin}
\begin{lemma}
\label{lem:lem1}
Suppose  ${\mathcal C}$ is a convex subset of $\mathbb{R}^q$ and 
$x^*$ minimizes 
\begin{equation}
\label{eq:genCost1}
f(x)= \sum_{i=1}^q\gamma_i kl(x_i|\tilde{x}_i)
\end{equation}
over ${\mathcal C}\cap\bar{\mathcal X}$
where $\tilde{x}_i \in {\mathcal X}$ and $\gamma_i >0$ for $i=1,2,\dots, q$.
If  $x^*_{i} =0 $ for some $i$ then, $x_i = 0$ for all $x \in {\mathcal C} \cap \bar{\mathcal X}$.
\end{lemma}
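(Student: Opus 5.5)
The plan is a contradiction argument based on the infinite slope of $t\mapsto t\log t$ at the origin. Suppose $x^*_{i_0}=0$ for some index $i_0$, yet some $\hat x\in{\mathcal C}\cap\bar{\mathcal X}$ has $\hat x_{i_0}>0$. For $\epsilon\in(0,1]$ consider the segment
\[
x(\epsilon)=(1-\epsilon)x^*+\epsilon\hat x .
\]
Convexity of ${\mathcal C}$ and of $\bar{\mathcal X}$ gives $x(\epsilon)\in{\mathcal C}\cap\bar{\mathcal X}$. Since $x^*$ is a minimizer, $f(x(\epsilon))\ge f(x^*)$ for all such $\epsilon$. We will show instead that $f(x(\epsilon))<f(x^*)$ for all sufficiently small $\epsilon>0$, which is the required contradiction.

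To do this we split the index set into $P=\{i: x^*_i>0\}$ and $Z=\{i: x^*_i=0\}$; note $i_0\in Z$. Write $g(\epsilon)=f(x(\epsilon))$ and estimate
\[
\frac{g(\epsilon)-g(0)}{\epsilon}
\]
term by term as $\epsilon\downarrow 0$.

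\emph{Indices in $P$.} Here $x_i(\epsilon)$ stays bounded away from zero for small $\epsilon$. On $(0,\infty)$ the function $kl(\cdot|\tilde x_i)$ is smooth with derivative $\log(t/\tilde x_i)$. Hence the difference quotient of $\gamma_i\,kl(x_i(\epsilon)|\tilde x_i)$ converges to the finite number $\gamma_i\log(x^*_i/\tilde x_i)(\hat x_i-x^*_i)$.

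\emph{Indices in $Z$.} Here $x_i(\epsilon)=\epsilon\hat x_i$. If $\hat x_i=0$ the term is constant, equal to $\gamma_i\tilde x_i$, and contributes nothing. If $\hat x_i>0$, the definition of $kl$ gives
\[
\frac{kl(\epsilon\hat x_i|\tilde x_i)-\tilde x_i}{\epsilon}
=\hat x_i\log\!\left(\frac{\epsilon\hat x_i}{\tilde x_i}\right)-\hat x_i ,
\]
which tends to $-\infty$ as $\epsilon\downarrow0$. The index $i_0$ is of this kind, and each such term carries a positive weight $\gamma_i$.

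Summing, $(g(\epsilon)-g(0))/\epsilon\to-\infty$. Therefore $g(\epsilon)<g(0)=f(x^*)$ for small $\epsilon>0$, which contradicts minimality. Consequently $x_{i_0}=0$ for every $x\in{\mathcal C}\cap\bar{\mathcal X}$.

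The only delicate step is the boundary behaviour. The function $kl(\cdot|\tilde x)$ is continuous at $0$ (because $t\log t\to0$), but its right derivative there is $-\infty$. We must make sure that this divergence cannot be cancelled by the other coordinates. That is why the other coordinates are handled separately. Those in $P$ give finite limits by smoothness. Those in $Z$ either contribute nothing or contribute further terms tending to $-\infty$, so they only reinforce the divergence. Writing the $Z$-terms explicitly, as above, avoids any appeal to differentiability at the boundary.
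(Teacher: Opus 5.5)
Your proof is correct and follows essentially the same route as the paper: a contradiction along the segment $(1-\epsilon)x^*+\epsilon\hat x$, with the indices split according to whether $x^*_i>0$. In both, the $-\infty$ right-derivative of $kl(\cdot|\tilde x_i)$ at $0$ on coordinates with $x^*_i=0<\hat x_i$ outweighs the bounded contributions of the other coordinates. The only difference is that you compute the one-sided difference quotient directly (explicitly on the boundary coordinates), whereas the paper applies the mean value theorem at an intermediate point $\underline{\lambda}$ and then lets $\lambda\to 0$.
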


\begin{proof}
Let  $x^*$ minimize 
$f$
over $ {\mathcal C} \cap \bar{\mathcal X}$. 
Suppose, on the contrary that, for some $i$, $x^*_i=0$ and $x_i >0$ for some $x \in {\mathcal C} \cap \bar{\mathcal X}$
 and let 
\begin{align*}
\mathcal {I}_0 &= \{ i \in \{1,2,\dots, q\}: x^*_i =0 \mbox{ and } x_i = 0\}	
\\
\mathcal {I}_1 &= \{ i \in \{1,2,\dots, q\}: x^*_i >0 \}	\\
\mathcal {I}_2 &= \{ i \in \{1,2,\dots, q\}: x^*_i =0 \mbox{ and } x_i >0\}	
\end{align*}
Then $\mathcal {I}_2$ is non-empty.

Consider  any  $\lambda \in (0, \frac{1}{2}]$   and  let $y=(1-\lambda)x^* + \lambda x$.
Since $x^*, x \in   {\mathcal C} \cap \bar {\mathcal X}$ and  ${\mathcal C} \cap \bar {\mathcal X}$ is convex, $y \in {\mathcal C} \cap \bar{\mathcal X}$.
Also
\begin{align*}
y_i = (1-\lambda)x_i^* + \lambda x_i
\end{align*}
implies that
\begin{align}
y_i &= 0 \qquad \qquad \mbox { for } i \in {\mathcal I}_0 \label{eq:ybounds0}\\
 \max\{x^*_i, x_i\} \ge y_i & \ge  x^*_i/2 >0	 \quad  \mbox { for } i \in {\mathcal I}_1	\label{eq:ybounds1}\\
  y_i  & =\lambda x_i>0 \quad  \mbox { for } i \in {\mathcal I}_2 \label{eq:ybounds2}
\end{align}
hence 
\[
f(y) = \sum_{i\notin {\mathcal I}_0}\gamma_i kl(y_i|\tilde{x}_i)\qquad f(x^*) = \sum_{i\notin {\mathcal I}_0}\gamma_i kl(x_i^*|\tilde{x}_i)
\]
By the mean value theorem, there exists  $\underline{\lambda} \in (0, \lambda)$ such that
\begin{align}
f(y)
&=f(x^*) + \sum_{i\notin {\mathcal I}_0}   \gamma_i\log \left( \frac{\underline{y}_i}{\tilde{x}_i }\right)(x_i - x^*_i)  \label{eq:f(T)} 
\end{align}
where $\underline{y} = (1-\underline{\lambda})x^* + \underline{\lambda} x$.
 
Consider any $i \in {\mathcal I}_1$.
It follows from \eqref{eq:ybounds1} that for all $\lambda \in (0, \frac{1}{2}]$,
 \[
\gamma_i\log \left( \frac{\underline{y}_i}{\tilde{x}_i }\right)(x_i- x^*_i) \le \beta_i
\]
  for some $\beta_i$.
Consider now any $i \in {\mathcal I}_2$.
It follows from \eqref{eq:ybounds2} that
\[
\lim_{\lambda \rightarrow 0} \; \underline{y}_i  = 0
\]
hence
\[
\lim_{\lambda \rightarrow 0} \gamma_i\log \left( \frac{\underline{y}_i}{\tilde{x}_i }\right)(x_i- x^*_i) = -\infty
\]
This implies  that, for $\lambda >0$ sufficiently small,
\[
\sum_{i\notin {\mathcal I}_0} \gamma_i\log \left( \frac{\underline{y}_i}{x_i }\right) (x_i - x^*_i) <0
\]
which along with \eqref{eq:f(T)} yields the contradiction that
$
f(y)  < f(x^*).
$ 
Hence, if  $x^*_i =0$  then $x_i =0$ for all   $x \in {\mathcal C} \cap\bar{\mathcal X}$.
\hfill
$\qed$
\end{proof}

The  general OT problem in \eqref{eq:GenOpt2} is equivalent to minimizing
\begin{align}
\label{eq:J(T,v)2}
\sum_{(i,j)\in \mathcal{Z}}  kl(t_{ij} | k_{ij})  + \sum_{l=1}^{N_1} 
\gamma_l kl(b^l | \tilde{b}^l )
\end{align}
subject to 
\begin{align}
\label{eq:GenConst3}
\sum_{(i,j) \in {\mathcal Z}} a^l_{ij} t_{ij} -b^l&= 0\qquad l=1,2,\dots, N_1\\
\label{eq:GenConst4}
\sum_{(i,j) \in \mathcal Z} a^l_{ij}t_{ij} &= \tilde{b}^l
\qquad l= N_1+1, N_1+2,\dots, N
\end{align}
and
\begin{align}
\label{eq:t>02}
 t_{ij} \ge 0 \mbox{ for } (i,j) \in {\mathcal Z},\quad 
b^l \ge 0  \mbox{ for } l=1,2, \cdots, N_1	
 \end{align}
The constraints in \eqref{eq:GenConst3}-\eqref{eq:t>02} describe a convex set and the 
function  in \eqref{eq:J(T,v)2} is of the form of the one in \eqref{eq:genCost1}. Hence the above lemma yields the following corollary.


\begin{corollary}
\label{cor:cor1}
Suppose $T^*$ is a minimizer for the general OT problem in  \eqref{eq:GenOpt2} and $t^*_{ij} =0$ for some $(i,j) \in {\mathcal Z}$.
Then $t_{ij} = 0$ for all $T$ satisfying \eqref{eq:GenConst}.
\end{corollary}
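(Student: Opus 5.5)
The plan is to derive the corollary directly from Lemma~\ref{lem:lem1}, using the reformulation \eqref{eq:J(T,v)2}--\eqref{eq:t>02} of problem \eqref{eq:GenOpt2} given just before the statement. I will work in the variable space $\mathbb{R}^q$ with $q = |\mathcal{Z}| + N_1$, and write a point as $x = \bigl( (t_{ij})_{(i,j)\in\mathcal{Z}}, (b^l)_{l=1}^{N_1} \bigr)$. The weights are $\gamma_{(i,j)} = 1$ for the transport entries and $\gamma_l$ for the auxiliary variables. The reference point is $\tilde{x} = \bigl( (k_{ij}), (\tilde{b}^l) \bigr)$. The set $\mathcal{C}$ is the affine subspace cut out by the linear equalities \eqref{eq:GenConst3} and \eqref{eq:GenConst4}, so $\mathcal{C}$ is convex. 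The sign constraints \eqref{eq:t>02} are exactly the requirement $x \in \bar{\mathcal X}$.

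First I check the hypotheses of Lemma~\ref{lem:lem1}. The requirement $\tilde{x} \in \mathcal{X}$ holds because $k_{ij} = \tilde{t}_{ij}\exp(-c_{ij}/\gamma_0) > 0$ for $(i,j)\in\mathcal{Z}$ by \eqref{eq:K}, and $\tilde{b}^l > 0$ for $l \le N_1$ by assumption. The cost \eqref{eq:J(T,v)2} is then literally $f(x)$ from \eqref{eq:genCost1}.

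Next I make the correspondence between the two problems precise.

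Given $T \in \mathcal{T}$, define $b^l(T) = \sum_{(i,j)\in\mathcal{Z}} a^l_{ij} t_{ij}$ for $l \le N_1$.

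Since $a^l_{ij} \ge 0$ and $t_{ij} \ge 0$, we get $b^l(T) \ge 0$. Hence every $T$ satisfying \eqref{eq:GenConst} yields a point $x(T) = (T, b(T)) \in \mathcal{C} \cap \bar{\mathcal X}$. Conversely, every point of $\mathcal{C} \cap \bar{\mathcal X}$ has this form, since \eqref{eq:GenConst3} forces $b = b(T)$. Moreover $f(x(T)) = L(T)$. So $T^*$ minimizes \eqref{eq:GenOpt2} if and only if $x(T^*)$ minimizes $f$ over $\mathcal{C} \cap \bar{\mathcal X}$.

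Now suppose $t^*_{ij} = 0$ for some $(i,j) \in \mathcal{Z}$. Then the corresponding coordinate of the minimizer $x(T^*)$ vanishes. Lemma~\ref{lem:lem1} gives that this coordinate is zero for every point of $\mathcal{C} \cap \bar{\mathcal X}$, and in particular for $x(T)$ whenever $T \in \mathcal{T}$ satisfies \eqref{eq:GenConst}. Hence $t_{ij} = 0$ for every such $T$.

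The only step needing care is this bijective correspondence. One must confirm that eliminating the $b^l$ loses no feasible points, which relies on the nonnegativity of $a^l_{ij}$ for the soft constraints. One must also note that "all $T$ satisfying \eqref{eq:GenConst}" is read within $\mathcal{T}$, so that $t_{ij} \ge 0$. The argument makes no use of the hypothesis that $T^*$ is a minimizer beyond transferring it to $x(T^*)$, so I expect no further obstacles.
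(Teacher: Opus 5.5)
Your proposal is correct and follows the paper's route. The paper also recasts \eqref{eq:GenOpt2} as minimizing \eqref{eq:J(T,v)2} over the convex set defined by \eqref{eq:GenConst3}--\eqref{eq:t>02} and then applies Lemma~\ref{lem:lem1}. You additionally make explicit what the paper leaves implicit: that $\tilde{x} = \bigl((k_{ij}),(\tilde b^l)\bigr)\in\mathcal{X}$, that feasible $T$ and feasible $(T,b)$ correspond one-to-one (using $a^l_{ij}\ge 0$ for the penalized constraints $l\le N_1$), and that the conclusion must be read for $T\in\mathcal{T}$.
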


\subsection{ A characterization of an interior optimizer}
Consider the
problem of minimizing $f$ in \eqref{eq:genCost1}
subject to $x\in \bar{\mathcal X}$
and the constraints
\begin{align}
\label{eq:linConst}
\sum_{i=1}^q a^l_{i}x _i= \tilde{b}^l \quad l =1, 2, \dots, N
\end{align}
where $a^l_i, \tilde{b}^l \in \mathbb{R}$.

\begin{lemma}
\label{lem:lem2}
Suppose 
$\tilde{x}_i \in {\mathcal X}$, $\gamma_i >0$ for $i=1,2,\dots, q$ and 
 for each $i \in \{1,2, \dots, q\}$ there is an $x\in \mathbb{R}^q $ with   $x_i >0$  which satisfies  \eqref{eq:linConst}.
Then
$x^*$ minimizes 
\begin{equation*}
f(x)= \sum_{i=1}^q\gamma_i kl(x_i|\tilde{x}_i)
\end{equation*}
  subject to $x \in \bar{\mathcal X}$ and constraints \eqref{eq:linConst}
 if and only if 
 there exist positive  scalars $d_1,d_2, \dots, d_N$ 
such that
\begin{equation}
\label{eq:xstar}
x^*_i = \left(\prod_{ {\hat l}=1}^Nd_{\hat l}^{a^{\hat l}_{i}/\gamma_i} \right) \tilde{x}_i\qquad i=1,2, \dots, q
\end{equation}
and
\begin{align}
\label{eq:linConst*}
\sum_{i=1}^q a^l_i \left(\prod_{\hat{l}=1}^Nd_{\hat l}^{a^{\hat l}_i/\gamma_i} \right)\tilde{x}_i =\tilde{b}^l \qquad l=1,2,\dots, N
\end{align}

\end{lemma}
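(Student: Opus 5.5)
The plan is to prove the two directions separately. For necessity, first show that a minimizer is strictly positive, then read off \eqref{eq:xstar} from first-order optimality. For sufficiency, use convexity of $f$ together with the fact that its gradient at a point of the form \eqref{eq:xstar} is orthogonal to the feasible directions.

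\emph{Necessity.} Let $x^*$ minimize $f$ over $\bar{\mathcal X}$ subject to \eqref{eq:linConst}. The set ${\mathcal C}=\{x\in\mathbb{R}^q : \eqref{eq:linConst}\text{ holds}\}$ is affine, hence convex. By hypothesis, for each $i$ there is a feasible $x$ with $x_i>0$; I read this as $x\in\bar{\mathcal X}$, since that is how it is used. Lemma \ref{lem:lem1} then applies: if $x^*_i=0$ we would need $x_i=0$ for every $x\in{\mathcal C}\cap\bar{\mathcal X}$, which contradicts the hypothesis. Hence $x^*\in{\mathcal X}$, so $x^*$ is an interior point of the orthant and $f$ is differentiable there, with $\partial f/\partial x_i = \gamma_i\log(x_i/\tilde x_i)$.

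Since $x^*$ minimizes $f$ over the affine set ${\mathcal C}$ locally (the orthant constraint is inactive near $x^*$), the directional derivative $\nabla f(x^*)'h$ vanishes for every $h$ with $\sum_i a^l_i h_i=0$ for all $l$. Thus $\nabla f(x^*)$ lies in the orthogonal complement of the null space of the matrix $A=\{a^l_i\}$, which is the row space of $A$. Hence there are reals $\lambda_1,\dots,\lambda_N$ with $\gamma_i\log(x^*_i/\tilde x_i)=\sum_{\hat l}\lambda_{\hat l}a^{\hat l}_i$. This linear-algebra argument avoids any constraint qualification, even when the constraints are linearly dependent. Setting $d_{\hat l}=e^{\lambda_{\hat l}}>0$ and exponentiating gives \eqref{eq:xstar}, and \eqref{eq:linConst*} is just feasibility of $x^*$.

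\emph{Sufficiency.} Suppose $d_{\hat l}>0$ satisfy \eqref{eq:linConst*}, and define $x^*$ by \eqref{eq:xstar}. Then $x^*\in{\mathcal X}$ is feasible and $\nabla f(x^*)_i=\sum_{\hat l}\log(d_{\hat l})a^{\hat l}_i$. Each $kl(\cdot|\tilde x_i)$ is convex and continuous on $[0,\infty)$ and differentiable at $x^*_i>0$. The gradient inequality therefore holds on the closed orthant: $f(x)\ge f(x^*)+\nabla f(x^*)'(x-x^*)$ for every $x\in\bar{\mathcal X}$, which I would justify by taking limits along the segment from $x^*$ if needed. For feasible $x$,
\[
\nabla f(x^*)'(x-x^*)=\sum_{\hat l}\log(d_{\hat l})\sum_i a^{\hat l}_i(x_i-x^*_i)=\sum_{\hat l}\log(d_{\hat l})\,(\tilde b^{\hat l}-\tilde b^{\hat l})=0,
\]
so $f(x)\ge f(x^*)$ and $x^*$ is a minimizer.

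The main obstacle is the necessity direction's reliance on interiority, and this is exactly where Lemma \ref{lem:lem1} and the hypothesis are used. Without interiority, boundary minimizers could fail the product form, since $\log 0$ is undefined. A secondary point is to interpret the hypothesis as requiring nonnegative feasible points, so that Lemma \ref{lem:lem1} applies.
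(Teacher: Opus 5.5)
Your proposal is correct and follows the paper's route: Lemma~\ref{lem:lem1} gives $x^*\in\mathcal X$, and stationarity on the affine constraint set gives $\gamma_i\log(x^*_i/\tilde x_i)=\sum_{\hat l}\lambda_{\hat l}a^{\hat l}_i$, which is \eqref{eq:xstar} with $d_{\hat l}=e^{\lambda_{\hat l}}$ (the paper writes $d_{\hat l}=e^{-\alpha_{\hat l}}$). You are more careful than the paper in two places: you prove sufficiency explicitly via the convexity gradient inequality, where the paper just asserts that the Lagrange conditions are an ``iff''; and you read the hypothesis as asking for a feasible $x\in\bar{\mathcal X}$ with $x_i>0$, which is genuinely needed, since with merely $x\in\mathbb{R}^q$ the single constraint $x_1+x_2=0$ meets the hypothesis but has the boundary minimizer $x^*=0$, which is not of the form \eqref{eq:xstar}.
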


\begin{proof}
Since,  for each $i \in \{1,2, \dots, q\}$ there is an $x\in \mathbb{R}^q $ with   $x_i >0$
which satisfies \eqref{eq:linConst}, it follows from  Lemma \ref{lem:lem1} that
\[
x^*_i >0 \qquad i=1,2,\dots, q
\]
Hence $x^*$ is in the open set $ {\mathcal X}$.
The Lagrangian associated with this optimization  problem is
\begin{align*}
L(x, \alpha)  =    \sum_{i=1}^q  \gamma_i kl(x_i | \tilde{x}_i) 
+\sum_{\hat{l}=1}^N \alpha_{\hat{l}} \left(\sum_{i=1}^q a^{\hat{l}}_i x_i- \tilde{b}^{\hat l} \right)
\end{align*}
and $x^*$ is a minimizer if and only if
\begin{align}
\label{eq:linConst*2}
\sum_{i=1}^q a^l_ix^*_i= \tilde{b}_l \quad l=1, 2, \dots, N
\end{align}
and
 there exist scalars $\alpha_1,\alpha_2, \dots, \alpha_N$ such that for each $i=1,2,\dots, q$,
\begin{align*}
\frac{\partial L}{\partial x_i}(x^*, \alpha)  = 0
\end{align*}
that is,
\begin{align*}
\gamma_i \log \left( \frac{x^*_i}{\tilde{x}_i }  \right) +\sum_{{\hat l}=1}^N\alpha_{\hat l} a^{\hat l}_i =0
\end{align*}
or
\[
x^*_i = \tilde{x}_i e^{-\sum_{{\hat l}=1}^N\alpha_{\hat l} a^{\hat l}_i /\gamma_i} =
\left(\prod_{\hat{l}=1}^Nd_{\hat l}^{a^{\hat l}_i/\gamma_i} \right) \tilde{x}_i
\]
where
$
d_{\hat l}= e^{-\alpha_{\hat l}}.
$
Equality  \eqref{eq:linConst*} results from \eqref{eq:linConst*2}.

\end{proof}

\begin{lemma}
\label{lem:lem2bGen}
Suppose that for all $(i,j) \in {\mathcal Z}$ there is a matrix $T\in {\mathcal T}$ satisfying \eqref{eq:GenConst} with $t_{ij} >0$.
Then $T^*$ is a minimizer for the general OT problem in  \eqref{eq:GenOpt2} iff
there exist positive scalars $d_1,d_2, \dots, d_N$ 
 such that
\begin{equation}
\label{eq:tijstar}
t^*_{ij} = \left(\prod_{\hat{l}=1}^Nd_{\hat l }^{a^{\hat l}_{ij}} \right)
k_{ij}\qquad 
(i, j)\in \mathcal{Z}
\end{equation}
and
\begin{align}
\label{eq:dconst3}
\sum_{(i,j) \in \mathcal{Z}} a^l_{ij}
\left(\prod_{{\hat l }=1}^Nd_{\hat l }^{a^{\hat l}_{ij}} \right) k_{ij} 
&=d_{l}^{-1/\gamma_{l}} \tilde{b}_l
\quad l= 1,2, \cdots, N_1
\\
\sum_{(i,j) \in \mathcal{Z}} a^l_{ij}\left(\prod_{{\hat l}=1}^Nd_{\hat l}^{a^{\hat l}_{ij}} \right)
 k_{ij} 
&=  \tilde{b}_l
\qquad l=N_1+1, N_1+2, \cdots, N
\label{eq:dconst4}
\end{align}

\end{lemma}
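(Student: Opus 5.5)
The plan is to reduce the general OT problem \eqref{eq:GenOpt2} to the setting of Lemma \ref{lem:lem2}. We use the lifted formulation already introduced: minimize \eqref{eq:J(T,v)2} over the variables $x=(t_{ij})_{(i,j)\in\mathcal Z}$ together with $(b^l)_{l=1}^{N_1}$, subject to \eqref{eq:GenConst3}, \eqref{eq:GenConst4} and \eqref{eq:t>02}. In this formulation $q=|\mathcal Z|+N_1$, and the weights are $\gamma_i=1$ with reference $\tilde x_i=k_{ij}>0$ for the $t$-coordinates, and $\gamma_l$ with reference $\tilde b^l>0$ for the $b^l$-coordinates. The constraints are linear equalities of the form \eqref{eq:linConst}. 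The coefficient of $t_{ij}$ in constraint $l$ is $a^l_{ij}$. The coefficient of $b^{\hat l}$ in constraint $l$ is $-\delta_{l\hat l}$ for $l\le N_1$ and $0$ for $l>N_1$. The right-hand sides are $0$ for $l\le N_1$ and $\tilde b^l$ otherwise. First, I will note that $T^*$ minimizes \eqref{eq:GenOpt2} iff $(T^*,b^*)$ with $b^{*l}=\sum a^l_{ij}t^*_{ij}$ minimizes the lifted problem. The two objectives coincide up to the additive constant from rewriting the cost via $K$, and $b$ is uniquely determined by $T$.

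Next comes the hypothesis check for Lemma \ref{lem:lem2}. For each $(i,j)\in\mathcal Z$, the assumption gives a feasible $T$ with $t_{ij}>0$, and setting $b^l=\sum a^l_{ij}t_{ij}$ lifts it to a feasible point with a positive $t_{ij}$-coordinate. For each $b^l$-coordinate we also need a feasible lifted point with $b^l>0$; this is the step I expect to be the main obstacle. Since $a^l_{ij}\ge 0$, we have $b^l=\sum a^l_{ij}t_{ij}\ge0$. I would average the feasible matrices $T^{(ij)}$ over all $(i,j)\in\mathcal Z$. This average is feasible by convexity and satisfies $t_{ij}>0$ for every $(i,j)\in\mathcal Z$. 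Hence $b^l>0$ unless $a^l_{ij}=0$ for all $(i,j)\in\mathcal Z$. That degenerate case forces $b^l\equiv 0$, and I would simply exclude it; it is implicitly ruled out by the standing assumption that each penalised constraint is nontrivial. Lemma \ref{lem:lem1} then guarantees that the minimizer is interior, as used in the proof of Lemma \ref{lem:lem2}.

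Now apply Lemma \ref{lem:lem2} to obtain positive multipliers $\hat d_1,\dots,\hat d_N$. For the $t$-coordinates, \eqref{eq:xstar} gives $t^*_{ij}=\prod_{\hat l}\hat d_{\hat l}^{\,a^{\hat l}_{ij}}k_{ij}$, which is \eqref{eq:tijstar} with $d=\hat d$. For the $b^l$-coordinate, $l\le N_1$, the only nonzero coefficient is $-1$, in constraint $l$, so $b^{*l}=\hat d_l^{-1/\gamma_l}\tilde b^l$. Substituting these into \eqref{eq:linConst*}, constraint $l\le N_1$ reads $\sum a^l_{ij}t^*_{ij}-b^{*l}=0$, which is exactly \eqref{eq:dconst3}, and constraint $l>N_1$ gives \eqref{eq:dconst4}. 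For the converse, given positive $d_l$ satisfying \eqref{eq:tijstar}–\eqref{eq:dconst4}, define $b^{*l}=d_l^{-1/\gamma_l}\tilde b^l$. Then $(T^*,b^*)$ has the form \eqref{eq:xstar} and satisfies \eqref{eq:linConst*}. The "if" direction of Lemma \ref{lem:lem2} shows it minimizes the lifted problem, so $T^*$ minimizes \eqref{eq:GenOpt2}.

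Finally, I would remark on sufficiency in the converse. Convexity of the objective makes the stationarity conditions sufficient, which is already built into the "iff" of Lemma \ref{lem:lem2}, so no extra argument is needed beyond the coefficient bookkeeping above.
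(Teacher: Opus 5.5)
Your proposal is correct and follows essentially the same route as the paper. You lift to the $(T,b)$ problem \eqref{eq:J(T,v)2}--\eqref{eq:t>02} and use convexity, together with the nondegeneracy that each penalised constraint has some $a^l_{ij}>0$ (which the paper also assumes implicitly), to get a feasible point with all coordinates positive; you then read off \eqref{eq:tijstar}--\eqref{eq:dconst4} from Lemma \ref{lem:lem2}, using weight $1$ for each $t_{ij}$ and coefficient $-1$ with weight $\gamma_l$ for each $b^l$, and you are slightly more explicit than the paper in checking $b^l\ge 0$ from $a^l_{ij}\ge 0$ and in spelling out the converse via the ``if'' part of Lemma \ref{lem:lem2}.
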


\begin{proof}  
Recall that the optimization problem in  \eqref{eq:GenOpt2}  is equivalent to minimizing the function in 
\eqref{eq:J(T,v)2} subject to  \eqref{eq:GenConst3}-\eqref{eq:t>02}. 
This latter problem is of the form considered in Lemma \ref{lem:lem2}. It is assumed that for each $(i,j) \in {\mathcal Z}$ there is a matrix $T\in {\mathcal T}$ satisfying \eqref{eq:GenConst}  with $t_{ij} >0$.
Since the set of feasible transport maps is convex,
there is a feasible $T$ with $t_{ij} >0$ for all $(i,j) \in {\mathcal Z}$.
Since for each $l$ there exists $(i,j) \in {\mathcal Z}$ such that $a_{ij} >0$ n
it follows from constraint \eqref{eq:GenConst3} that there is a feasible $(T,b)$
with $t_{ij} >0$ for all $(i,j) \in {\mathcal Z}$ and  $b^l >0$ for $l=1,2,\cdots, N_1$ .

Let $d_{1}, d_{2}, \dots, d_N$  represent the positive scalars associated with constraints
\eqref{eq:GenConst3}  and \eqref{eq:GenConst4}.
The parameter $\gamma_i$  associated with each $t_{ij}$ equals one. It now follows from Lemma \ref{lem:lem2} and 
\eqref{eq:xstar} that
\[
t^*_{ij} = \left(\prod_{{\hat l}=1}^Nd_{\hat l}^{a^{\hat l}_{ij}} \right)
k_{ij}\qquad (i,j) \in \mathcal{Z}
\]
Each $b_l$ is only associated with   one constraint in \eqref{eq:GenConst3}. 
The  $a^l_i$ parameter associated with $b_l$  is $-1$ and the $\gamma_i$ parameter equals 
$\gamma_{l}$.
It now follows from Lemma \ref{lem:lem2} and 
\eqref{eq:xstar} that
\[
b^*_l = d_{l}^{-1/\gamma_{l}} \tilde{b}_l
\qquad l=1,2,\dots, N_1
\]

Constraint \eqref{eq:GenConst3} results in \eqref{eq:dconst3}
and constraint \eqref{eq:GenConst4} results in \eqref{eq:dconst4}.

\hfill
$\qed$
\end{proof}

Using the above lemma we now obtain a characterization of the optimizers for the OT problem 
in \eqref{eq:GenOpt}.

\begin{lemma}
\label{lem:lem2b}
Suppose that for all $(i,j) \in {\mathcal Z}$ there is a matrix $T\in {\mathcal T}$ satisfying \eqref{eq:rowconstraints1} and  \eqref{eq:colconstraints1} with $t_{ij} >0$.
Then $T^*$ is a minimizer for the problem in  \eqref{eq:GenOpt}   iff
there exist    positive scalars, $d_{11}, \dots,  d_{1m}$ and $d_{21},\dots, d_{2n}$,  such that, 
\begin{equation}
t^*_{ij} = d_{1i} d_{2j}k_{ij}   \qquad (i, j)
\in  \mathcal{Z}
\label{eq:TKscale2}
\end{equation}
and
\begin{align}
 d_{1i} &=\frac{ \tilde{u}_i} {\sum_{j=1}^n  k_{ij} d_{2j}}	\qquad \qquad \quad i\in {\mathcal I}
 \label{eq:d1i}\\
  d_{1i} &= \left( \frac{ \tilde{u}_i} {\sum_{j=1}^n  k_{ij} d_{2j}}\right)^{\frac{\gamma_{1i}}{1+\gamma_{1i}}}
  \qquad i \notin {\mathcal I}
  \label{eq:d1i2}\\
  d_{2j} &= \frac{ \tilde{v}_j}{\sum_{i=1}^m  d_{1i} k_{ij}}	
  \qquad \qquad \quad j\in {\mathcal J}	 \label{eq:d2j}\\
d_{2j} &= \left(\frac{ \tilde{v}_j}{\sum_{i=1}^m  d_{1i} k_{ij}}\right)^{\frac{\gamma_{2j}}{1+\gamma_{2j}}}
  \qquad j \notin {\mathcal J}
    \label{eq:d2j2}
\end{align}

\end{lemma}
 \begin{proof}  
 
Here we apply  Lemma \ref{lem:lem2bGen}.
Let $d_{1i}$ with $l\in \mathcal{I}$ 
be the positive scalar associated with   constraint $l=i$ in \eqref{eq:rowconstraints1}.
This is a special case of \eqref{eq:GenConst} with
\begin{equation}
\label{eq:bl}
\tilde{b}^l = \tilde{u}_l
\end{equation}
and
\begin{equation}
\label{eq:alij}
a^l_{ij} = \left\{ \begin{array}{rcl}
1 &\text{if} & i=l  \\
0 &\text{if} & i \neq l  \\
\end{array} \right.
\end{equation}

Let $d_{1l}$ with $l \notin \mathcal{I}$ 
be the positive scalar associated with the cost term
$\gamma_{1l} kl\left(\left. \sum_{j=1}^n  t_{lj} \right| \tilde{u}_l\right)$.
This is a special case of 
$\gamma_l kl\left( \sum_{(i,j) \in {\mathcal Z}} 
\left. a^l_{ij} t_{ij}\right|\tilde{b}^l  \right)$
with
$\tilde{b}^l$ and $a^l_{ij}$ given by \eqref{eq:bl} and \eqref{eq:alij}, respectively.

Similarily, let  $d_{2l}$ with $l\in \mathcal{J}$ 
be  the positive scalar associated with $j=l$ in constraints  \eqref{eq:colconstraints1}.
This is a special case of \eqref{eq:GenConst} with
\begin{equation}
\label{eq:bl2}
\tilde{b}^l = \tilde{v}_l
\end{equation}
and
\begin{equation}
\label{eq:alij2}
a^l_{ij} = \left\{ \begin{array}{rcl}
1 &\text{if} & j=l  \\
0 &\text{if} & j \neq l  \\
\end{array} \right.
\end{equation}

Let $d_{2l}$ with $l\notin \mathcal{J}$ 
be the positive scalar associated with the cost term
$\gamma_{2l} kl\left(\left. \sum_{i=1}^m  t_{il} \right| \tilde{v}_l\right)$.
This is a special case of 
$\gamma_l kl\left( \sum_{(i,j) \in {\mathcal Z}} 
\left. a^l_{ij} t_{ij}\right|\tilde{b}^l  \right)$
with
$\tilde{b}^l$ and $a^l_{ij}$ given by \eqref{eq:bl2} and \eqref{eq:alij2}, respectively.

It now follows from \eqref{eq:tijstar}, \eqref{eq:alij} and \eqref{eq:alij2} that
\[
t^*_{ij} = d_{1i}d_{2j} k_{ij} \qquad (i, j) \in \mathcal{Z}
\]

Recalling \eqref{eq:dconst4}, \eqref{eq:bl} and \eqref{eq:alij}, for each  $i \in \mathcal{I}$, constraint \eqref{eq:colconstraints1} results in
\[
\sum_{j=1}^n d_{1i}d_{2j}k_{ij} =\tilde{u}_i
\]
(Recall that $k_{ij} =0$ when $(i,j)\notin \mathcal{Z}$).
This 
 results in 
\eqref{eq:d1i}.

Recalling \eqref{eq:dconst3},\eqref{eq:bl} and \eqref{eq:alij}, for each  $i \notin \mathcal{I}$, 
the cost term $
\gamma_{ij} kl\left(\left. \sum_{j=1}^n  t_{ij} \right| \tilde{u}_i\right) $
results in
\[
\sum_{j=1}^n d_{1i}d_{2j}k_{ij} =d_{1i}^{-1/\gamma_i}\tilde{u}_i
\]
which implies 
\eqref{eq:d1i2}.

In a similar fashion, consideration of constraints \eqref{eq:colconstraints1} and cost terms $\gamma_{2j} kl\left(\left. \sum_{i=1}^m  t_{ij} \right| \tilde{v}_j\right) $
results in 
\eqref{eq:d2j} and \eqref{eq:d2j2}, respectively.

\hfill
$\qed$
\end{proof}

\begin{remark}
Note that, if $t^*_{ij} = 0$ for some  $(i, j) \in \mathcal{Z}$ then, 
\eqref{eq:TKscale2} no longer holds.
In this case, $t_{ij} = 0$ for all $T$ satisfying  
\eqref{eq:rowconstraints1} and  \eqref{eq:colconstraints1}; 
recall Corollary \ref{cor:cor1}.
\end{remark}

\section{Convergence of the proposed algorithms}
\label{sec:Proof}

Recall the general optimization problem
\begin{equation}
\label{eq:opt0}
\min_{x \in \bar{\mathcal X}}
\sum_{i=1}^q \gamma_i kl(x_i|\tilde{x}_i)
\quad \mbox{s.t.} \quad 
\sum_{i=1}^q a^l_{i}x _i= \tilde{b}^l \quad l =1, 2, \dots, N
\end{equation}
where $\tilde{x}_i >0$ and $\gamma_i >0$ for $i=1,2, \dots, q$.

\begin{assumption}
\label{ass1}
For each 
$l=1,2,\dots, N$ and $\hat{x}\in\mathcal{X}$,  there exists  
$x^*\in \mathcal{X}$
which solves the optimization problem
\begin{align}
\label{eq:opt0l}
\min_{x \in \bar{\mathcal X} } \sum_{i=1}^q \gamma_i kl(x_i|\hat{x}_i) \qquad s.t.\quad  \sum_{i=1}^q a^l_{i}x _i= \tilde{b}^l 
\end{align}
\end{assumption}

Note that, in this assumption,  the minimizer is in  \mbox{$\mathcal{X}$}.
We will denote the point $x^*$ above by 
$
P_l(\hat{x})
$.
The  following result may be gleaned from  \cite{Bregman1967}.

\begin{theorem}
\label{th:Bregman}
Suppose the optimization problem \eqref{eq:opt0} is feasible,  Assumption \ref{ass1} holds and
\begin{align}
x(k+1) &= P_{l}(x(k))		\quad \text{with} \quad x(0)=\tilde{x}
\end{align}
where 
\begin{equation*}
l= l_k= k\, \bmod N +1
\end{equation*}
Then, the sequence $\{x(k)\}$ converges to a limit $x^* \in \bar{\mathcal X}$ and  $x^*$
is a minimizer for  optimization problem \eqref{eq:opt0}.
\end{theorem}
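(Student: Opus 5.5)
The approach is Bregman's cyclic projection argument, with $D(x|y)=\sum_i \gamma_i\, kl(x_i|y_i)$ as the Bregman distance. This is the distance generated by the strictly convex function $\phi(x)=\sum_i \gamma_i (x_i\log x_i - x_i)$ on $\bar{\mathcal X}$. Since $\sum_i \gamma_i kl(x_i|\tilde x_i)=D(x|\tilde x)$, problem \eqref{eq:opt0} asks for the $D$-projection of $\tilde x$ onto the intersection $H=\bigcap_l H_l$ of the hyperplanes $H_l=\{x\in\bar{\mathcal X}:\sum_i a^l_i x_i=\tilde b^l\}$. Assumption \ref{ass1} says each single-hyperplane projection $P_l(\hat x)$ exists and lies in $\mathcal X$. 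The iterates therefore remain in the open orthant, where $\phi$ is differentiable.

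\textbf{Step 1: first-order structure of one projection.} Because $P_l(\hat x)\in\mathcal X$, the Lagrange condition gives
\[
\gamma_i\log\bigl(P_l(\hat x)_i/\hat x_i\bigr) = -\alpha\, a^l_i
\]
for a scalar $\alpha$. From this I would derive the generalised Pythagorean identity: for every $z\in H_l$,
\[
D(z|\hat x) = D(z|P_l(\hat x)) + D(P_l(\hat x)|\hat x).
\]
The identity follows by expanding the logarithms and using $\sum_i a^l_i z_i=\sum_i a^l_i P_l(\hat x)_i$.

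\textbf{Step 2: descent and boundedness.} Fix a feasible $z\in H$. Applying Step 1 along the iteration gives
\[
D(z|x(k+1)) = D(z|x(k)) - D(x(k+1)|x(k)).
\]
So $D(z|x(k))$ is nonincreasing, and $\sum_k D(x(k+1)|x(k)) \le D(z|\tilde x)<\infty$. Boundedness of the level sets of $D(z|\cdot)$ yields a bounded sequence. Pinsker-type lower bounds on $kl$ then give $x(k+1)-x(k)\to 0$. Consequently, along any convergent subsequence all $N$ consecutive iterates share the same limit $x^\dagger$, and $x^\dagger$ satisfies every constraint, so $x^\dagger\in H$.

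\textbf{Step 3: identifying the limit as the minimizer.} This is the main obstacle, because limits may lie on the boundary of $\bar{\mathcal X}$, where $\log$ blows up. Telescoping the Lagrange conditions shows
\[
\gamma_i\log\bigl(x(k)_i/\tilde x_i\bigr) = \sum_l \mu_l(k)\, a^l_i ,
\]
that is, $\nabla\phi(x(k))-\nabla\phi(\tilde x)$ lies in $\mathrm{span}\{a^l\}$. Hence for any $z,z'\in H$ the difference $D(z|x(k))-D(z'|x(k))$ is independent of $k$ and equals $D(z|\tilde x)-D(z'|\tilde x)$. The identity is pure algebra, valid for all $k$ since $x(k)\in\mathcal X$, and in particular it does not pass through $\log$ at the limit. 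Now take $z'=x^\dagger$ along the subsequence. Continuity of $kl(\cdot|\cdot)$ at points where the first argument vanishes gives $D(x^\dagger|x(k_s))\to 0$, and lower semicontinuity gives $D(z|x^\dagger)\le\liminf_s D(z|x(k_s))$. Combining these, $D(z|\tilde x)-D(x^\dagger|\tilde x)\ge D(z|x^\dagger)\ge 0$, so $x^\dagger$ minimizes $D(\cdot|\tilde x)$ over $H$.

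Strict convexity of $D(\cdot|\tilde x)$ makes the minimizer $x^*$ unique, so every convergent subsequence has limit $x^*$. Then $D(x^*|x(k))$ is monotone and tends to $0$ along a subsequence, hence tends to $0$ along the whole sequence. Therefore $x(k)\to x^*$, which is the claim of Theorem \ref{th:Bregman}. The delicate points I would check carefully are the continuity of $kl$ at zero in the first argument, and showing that $D(z|x(k_s))$ remains finite even when the corresponding coordinates of $x^\dagger$ vanish. The latter holds because $D(z|x(k))$ is monotone.
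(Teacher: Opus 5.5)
Your proposal is correct and follows the route the paper relies on. The paper gives no proof of its own; it only says the result ``may be gleaned from'' \cite{Bregman1967}. Your argument is Bregman's cyclic-projection proof specialised to the weighted generalised KL divergence: the Pythagorean identity for single-hyperplane projections, monotonicity of $D(z|x(k))$ for feasible $z$, asymptotic regularity, and the identity $\nabla\phi(x(k))-\nabla\phi(\tilde x)\in\mathrm{span}\{a^l\}$ to show the limit is the minimizer. Your final monotonicity step is harmless but redundant, since boundedness together with uniqueness of every subsequential limit already gives convergence of the whole sequence.
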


This algorithm is very useful when one can readily solve the optimization problems in \eqref{eq:opt0l}.
The following lemma characterizes optimizers for \eqref{eq:opt0l}.

\begin{lemma}
\label{lem:lembasic}
Suppose $\hat{x} \in \mathcal{X}$.
Then
$
x^*  \in \mathcal{X}
$
minimizes
\begin{equation}
\label{eq:gencost}
 \sum_{i =1}^q  \gamma_ikl(x_i | \hat{x}_i) 
 \end{equation}
 subject to $x\in \bar{\mathcal X}$ and
\begin{equation}
\label{eq:linconstraint}
\sum_{i=1}^q a_ix_i = \tilde{b}
 \end{equation}
iff there exists $c>0$ such that
\begin{equation}
\label{eq:lem3result}
\begin{aligned}
x^*_i &= c^{a_i/\gamma_i}  \hat{x}_i 
\qquad i=1,2,\dots, q
\end{aligned}
\end{equation}
and
\begin{equation}
\label{eq:c1i0}
 {\sum_{i=1}^q a_{i}\hat{x}_ic^{a_i/\gamma_i}} = \tilde{b}
\end{equation}
\end{lemma}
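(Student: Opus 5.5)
Since $x^*$ is assumed to lie in the open set $\mathcal{X}$, the nonnegativity constraint $x \in \bar{\mathcal X}$ is inactive at $x^*$, and the problem is locally a smooth convex program with one linear equality constraint. The plan is to use the Lagrangian characterisation exactly as in the proof of Lemma~\ref{lem:lem2}, specialised to $N=1$.

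First, I would note that the objective $\sum_i \gamma_i kl(x_i|\hat{x}_i)$ is strictly convex and differentiable on $\mathcal X$, with
\[
\frac{\partial}{\partial x_i}\, \gamma_i kl(x_i|\hat x_i) = \gamma_i \log(x_i/\hat x_i).
\]
The feasible set $\bar{\mathcal X}\cap\{x:\sum_i a_i x_i=\tilde b\}$ is convex.

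Necessity: if $x^*\in\mathcal X$ is a minimizer, consider feasible directions $h$ with $\sum_i a_i h_i=0$. For small $|s|$, the point $x^*+sh$ stays in $\mathcal X$ and is feasible. Minimality then gives $\sum_i \gamma_i\log(x^*_i/\hat x_i)h_i=0$ for all such $h$. Hence the gradient is a multiple of $a$: there is $\alpha\in\mathbb R$ with $\gamma_i\log(x^*_i/\hat x_i)+\alpha a_i=0$. Setting $c=e^{-\alpha}>0$ gives $x^*_i=c^{a_i/\gamma_i}\hat x_i$, and substituting into the constraint \eqref{eq:linconstraint} yields \eqref{eq:c1i0}.

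Sufficiency: suppose $c>0$ satisfies \eqref{eq:c1i0}, and define $x^*$ by \eqref{eq:lem3result}. Then $x^*\in\mathcal X$ and $x^*$ is feasible. For any feasible $x\in\bar{\mathcal X}$, I would use convexity of each $kl(\cdot|\hat x_i)$ on $[0,\infty)$. Its supporting line at $x^*_i>0$ gives
\[
\gamma_i kl(x_i|\hat x_i)\ge \gamma_i kl(x^*_i|\hat x_i)+\gamma_i\log(x^*_i/\hat x_i)(x_i-x^*_i),
\]
which also holds at $x_i=0$ by continuity of $kl$ at $0$. Summing over $i$, the linear term equals $\log c\sum_i a_i(x_i-x^*_i)=0$, so $f(x)\ge f(x^*)$.

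The only delicate point is the boundary behaviour of $kl$ at $0$ in the sufficiency step. This is handled by noting that $kl(\cdot|\hat x)$ is continuous and convex on $[0,\infty)$, so the tangent inequality extends to $x_i=0$. Everything else is routine.
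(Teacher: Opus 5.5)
Your proof is correct and takes the same route as the paper: both derive the interior stationarity condition $\gamma_i\log(x^*_i/\hat x_i)+\alpha a_i=0$, set $c=e^{-\alpha}$, and obtain the equation for $c$ by substituting into the linear constraint. Your version is more careful. The paper simply asserts that the Lagrange condition is necessary and sufficient because $\mathcal X$ is open, whereas your null-space argument justifies necessity, and your supporting-line inequality (extended to $x_i=0$ by continuity of $kl$) supplies the convexity argument for sufficiency that the paper leaves implicit.
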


\begin{proof}
The Lagrangian $L$  associated with this optimization  problem is 
\begin{align*}
 L(x,\alpha)=
  \sum_{i =1}^q  \gamma_ikl(x_i| \hat{x}_i) 
+ \alpha \left(\sum_{i=1}^q  a_ix_i -\tilde{b} \right)
\end{align*}
where $\alpha \in \mathbb{R}$.
Since ${\mathcal X}$ is open, $x^* \in {\mathcal X}$ is a minimizer 
iff
\begin{equation}
\label{eq:linconst2}
 \sum_{i=1}^q a_i x^*_i = b
\end{equation}
and there is a scalar $\alpha$ such that, for $i=1,2,\dots, q$,
\[
\frac{\partial L}{\partial x_i} (x^*_i, \alpha)  =0 
\]
that is,
\begin{align*}
     \gamma_i \log \left( \frac{x^*_i}{\hat{x}_i}  \right)  +\alpha a_i  =0 
\end{align*}
or,
\[
x^*_i = \hat{x}_i e^{-\alpha a_i/\gamma_i} =c^{a_i/\gamma_i} \hat{x}_i 
\]
where
\[
 c = \exp(   -\alpha )
\]
Also, \eqref{eq:linconst2} 
results in \eqref{eq:c1i0}.
\hfill$\qed$
\end{proof}

\begin{remark}
\label{rem:f(c) = 0}
In special cases \eqref{eq:c1i0} can readily be solved for $c$.
Consider the situation in which
\[
a_i/\gamma_i = \kappa \qquad i=1,2, \dots, q
\]
In this case, \eqref{eq:c1i0} reduces to
\[
\sum_{i=1}^q a_i \hat{x}_ic^\kappa  = \tilde{b}
\]
hence
\[
c^\kappa = \frac{\tilde b}{\sum_{i=1}^q a_{i} \hat{x}_i}
\]
and
$x^*_i = c^\kappa \hat{x}_i$.

Consider also the case in which
\[
\tilde{b}= 0 \qquad a_i/\gamma_i = \kappa \qquad i=1,2, \dots, q-1
\]
In this case, \eqref{eq:c1i0} reduces to
\[
\sum_{i=1}^{q-1} a_{i} \hat{x}_ic^\kappa  + a_q \hat{x}_q c^{a_q/\gamma_q}  = 0
\]
hence
\[
c^{\kappa -a_q/\gamma_q} = \frac{-a_q\hat{x}_q}{\sum_{i=1}^{q-1} a_{i} \hat{x}_i}
\]
$x^*_i = c^\kappa \hat{x}_i$ for $i=1,2, \cdots, q-1$
and $x^*_q = c^{a_q/\gamma_q} \hat{x}_q$.

In the general case, one needs to solve
\begin{equation}
\label{eq:f(c)=0}
f(c) =0
\end{equation}
where
\[
f(c) =  {\sum_{i=1}^q a_{i}\hat{x}_i}c^{a_i/\gamma_i} -\tilde{b}
\]
If $a_i,\gamma_i,  >0$ for $i=1,2, \cdots, q$ and $\tilde{b}>0$ then, 
\[
f(0) =  -\tilde{b} <0 \qquad 
\lim_{c\rightarrow \infty} f(c) = \infty
\]
and
\[
f'(c)  = {\sum_{i=1}^q  \frac{a_i^2}{ \gamma_i} \hat{x}_i} c^{a_i/\gamma_i\!-\!1} >0 \qquad \text{for } c>0
\]
Hence, \eqref{eq:f(c)=0} has a unique solution for $c$.

\end{remark}


 Using Theorem \ref{th:Bregman} and Lemma \ref{lem:lembasic} we obtain the following  algorithm to solve
 optimization problem \eqref{eq:opt0}.


{\bf Algorithm 5.}
Initialize with
\begin{equation}
\label{eq:x0}
x(0)=\tilde{x}
\end{equation}
Iterate for $k=0,1,\dots $:
 
With
$
l= l_k :=  k\text{mod}\, N +1$, solve 
\begin{equation}
\label{eq:c1i2}
 \sum_{i=1}^q a^{l}_i x_i(k)  c_{l}(k+1)^{a^l_i/\gamma_i}= \tilde{b}^{l}
\end{equation}
for $c_l(k+1)$
and let
\begin{align}
\label{eq:genAl1}
x_i(k+1) =  c_{l}(k+1)^{a^l_i/\gamma_i} x_i(k) 	\quad i=1,2,\dots, q	
\end{align}

\begin{theorem}
\label{th:Genal}
Suppose the optimization problem \eqref{eq:opt0} is feasible.
Then, the sequence $\{x(k)\}$generated by Algorithm 5  converges to a limit $x^* \in \bar{\mathcal X}$ and  $x^*$
is a minimizer for  optimization problem \eqref{eq:opt0}.
\end{theorem}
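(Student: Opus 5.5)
The plan is to derive Theorem~\ref{th:Genal} from Theorem~\ref{th:Bregman}. We show that one step of Algorithm~5 is exactly the Bregman projection $P_l$ onto the $l$-th hyperplane, and that Assumption~\ref{ass1} holds along the iterates. Since the sequence is started at $x(0)=\tilde{x}$ and updated by $x(k+1)=P_{l_k}(x(k))$ with the same cyclic index $l_k=k \bmod N+1$, Theorem~\ref{th:Bregman} then gives convergence to a limit $x^*\in\bar{\mathcal X}$ that minimizes \eqref{eq:opt0}.

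The first step is to identify the update with the projection. Suppose $x(k)\in\mathcal X$. Apply Lemma~\ref{lem:lembasic} with $\hat x=x(k)$, $a_i=a^l_i$ and $\tilde b=\tilde b^l$. A point $x^\dagger\in\mathcal X$ minimizes the single-constraint problem \eqref{eq:opt0l} if and only if $x^\dagger_i=c^{a^l_i/\gamma_i}x_i(k)$ for some $c>0$ solving \eqref{eq:c1i2}. So whenever \eqref{eq:c1i2} has a positive root $c_l(k+1)$, the update \eqref{eq:genAl1} produces exactly $P_l(x(k))$. Moreover $x(k+1)\in\mathcal X$, because it is a positive rescaling of a positive vector. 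Since $x(0)=\tilde x\in\mathcal X$, induction keeps every iterate in $\mathcal X$.

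The second step, and the main obstacle, is showing that \eqref{eq:c1i2} has a positive root. This is essentially what Assumption~\ref{ass1} asks for at the iterates. I would study
\[
g(c)=\sum_{i=1}^q a^l_i x_i(k)\,c^{a^l_i/\gamma_i}-\tilde b^l .
\]
Its derivative is $g'(c)=\sum_i (a^l_i)^2 x_i(k)\,c^{a^l_i/\gamma_i-1}/\gamma_i\ge 0$, strictly positive unless $a^l=0$. So any root is unique. Substituting $c=e^{s}$, the function $\sum_i a^l_i x_i(k)e^{s a^l_i/\gamma_i}$ is continuous and increasing in $s$. As $s\to+\infty$ it tends to $+\infty$ if some $a^l_i>0$, and to $0^-$ otherwise. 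As $s\to-\infty$ it tends to $-\infty$ if some $a^l_i<0$, and to $0^+$ otherwise. Feasibility of \eqref{eq:opt0} supplies some $x\in\bar{\mathcal X}$ with $\sum_i a^l_ix_i=\tilde b^l$, which restricts the possible sign patterns:
\begin{itemize}
\item if $\tilde b^l>0$, some $a^l_i>0$;
\item if $\tilde b^l<0$, some $a^l_i<0$;
\item if $\tilde b^l=0$, either both signs occur or the feasible points vanish on the support of $a^l$.
\end{itemize}
In the cases where both limits straddle $\tilde b^l$, the intermediate value theorem gives a root. The degenerate subcase needs separate care: $\tilde b^l=0$ with all nonzero $a^l_i$ of one sign forces $x_i=0$ on that support, so no interior minimizer exists. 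I would handle it by an interior-feasibility requirement, i.e.\ assume Assumption~\ref{ass1} as in Theorem~\ref{th:Bregman}. Alternatively, I would first discard coordinates forced to zero, using Lemma~\ref{lem:lem1} and Corollary~\ref{cor:cor1}, so that the reduced problem satisfies the hypothesis.

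With the root secured, Assumption~\ref{ass1} holds at every iterate, and the iteration of Algorithm~5 coincides with that of Theorem~\ref{th:Bregman}. That theorem then gives $x(k)\to x^*\in\bar{\mathcal X}$ with $x^*$ a minimizer of \eqref{eq:opt0}, completing the proof.
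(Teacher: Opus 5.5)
Your proposal follows the paper's route: Lemma \ref{lem:lembasic} identifies each step of Algorithm 5 with the Bregman projection $P_{l_k}$, and Theorem \ref{th:Bregman} then gives convergence to a minimizer. Your check that \eqref{eq:c1i2} has a positive root is a step the paper skips, and your degenerate case ($\tilde b^l=0$ with all nonzero $a^l_i$ of one sign) is real — there \eqref{eq:c1i2} has no positive root and Algorithm 5 is undefined — so the statement as written implicitly needs Assumption \ref{ass1}, exactly the hypothesis you add.
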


Introducing  new variables defined by
\begin{align*}
d_l(0) &= 1 	\qquad \qquad \qquad l=1,2,\dots, N\\
\quad  d_l(k+1) &=c_l(k+1)d_l(k) \quad  l=l_k  \\
\quad  d_l(k+1) &=d_l(k)  \, \, \qquad \qquad l=1,2,\dots, N,  l\neq l_k
\end{align*}
we obtain another algorithm for solving  optimization problem \eqref{eq:opt0}.

{\bf Algorithm 6}
Initialize with
 \[
d_l(0) = 1\qquad l=1,2,\dots, N 
\]
Iterate for $k=0,1, \cdots$:

With
$
l_k =  k\, \text{mod}\, N +1
$,
let
\begin{equation}
\label{eq:dlkp1}
d_l(k+1) =d_l(k)  \qquad  l=1,2,\dots, N \qquad  l\neq l_k
    \end{equation}
    and
solve 
\begin{equation}
\label{eq:dlGen}
 \sum_{i=1}^q {a^{l_k}_{i}}\tilde{x}_i   \prod_{l=1}^Nd_{l}(k+1)^{a^l_{i}/\gamma_i}
   = \tilde{b}^{l_k}
 \end{equation}
 for $d_{l_k}(k+1)$,
and let $x(k+1)$ be given by 
\begin{equation}
\label{eq:xi(k+1)}
x_i(k+1) = \prod_{l=1}^Nd_{l}(k+1)^{a^l_{i}/\gamma_i}\, \tilde{x}_i
	\quad i=1,2,\dots, q
\end{equation}

Equation  \eqref{eq:xi(k+1)} follows from \eqref{eq:x0}, \eqref{eq:genAl1}, \eqref{eq:dlkp1}  and \eqref{eq:dlGen} follows from \eqref{eq:c1i2}.

\subsection{Proof of Theorem \ref{th:Genal1}}
\label{sec:prth1}

Recall that the general OT optimization problem in  \eqref{eq:GenOpt2}  is equivalent to minimizing the function in 
\eqref{eq:J(T,v)2} subject to \eqref{eq:GenConst3}-\eqref{eq:t>02}.
 Also,  constraints \eqref{eq:GenConst3}-\eqref{eq:GenConst4} can be expressed as
$\sum_{i=1}^qa^l_{i}x_i = \tilde{b}^l$ for $l=1,2,\dots, N$.
 So, we apply the algorithm in Theorem \ref{th:Genal}.

Considering  constraint $l$ in \eqref{eq:GenConst3},
\eqref{eq:genAl1} and \eqref{eq:c1i2}
result in
$$
\begin{aligned}
t_{ij}(k+1) &= c_{l}(k+1)^{a^l_{ij}}  t_{ij}(k) \quad (i, j) \in {\mathcal Z}\\
b^l(k+1) &= c_{l}(k+1)^{-1/\gamma_l} b^l(k)
\end{aligned}
$$
the variables $b^1, b^2, \dots,  b^{l-1}, b^{l+1}, \dots,b^{N_1}$ remain unchanged
and
\begin{equation*}
 \sum_{(i,j) \in {\mathcal Z}} a^l_{ij} t_{ij}(k)c_{l}(k+1)^{a^l_{ij}}   -b^l(k)c_{l}(k+1)^{-1/\gamma_l} =0
 \end{equation*}
 
Considering  constraint $l$ in \eqref{eq:GenConst4},
\eqref{eq:genAl1} and \eqref{eq:c1i2}
result in
$$ 
t_{ij}(k+1) = c_{l}(k+1)^{a^l_{ij}}  t_{ij}(k) \quad (i, j) \in {\mathcal Z}
$$
the variables $b^1, b^2, \dots,b^{N_1}$ remain unchanged
and
\begin{equation*}
 \sum_{(i,j) \in {\mathcal Z}} a^l_{ij} t_{ij}(k) c_{l}(k+1)^{a^l_{ij}}   = \tilde{b}^l
\end{equation*}
$\qed$

\subsection{Obtaining Algorithm 3 from Algorithm 1}
\label{sec:3from1}

For $i\in \mathcal{I}$, let $c_{1i}$ 
be the
scalar associated with   constraint $i$ in \eqref{eq:rowconstraints1}.
This is a special case of \eqref{eq:GenConst} with
\begin{equation}
\label{eq:bl2}
\tilde{b}^l = \tilde{u}_i
\end{equation}
and
\begin{equation}
\label{eq:alij2}
a^l_{ij} = \left\{ \begin{array}{rcl}
1 &\text{if} & i=l  \\
0 &\text{if} & i \neq l  \\
\end{array} \right.
\end{equation}
So,
\eqref{eq:tij(k+1)2} and \eqref{eq:cl(k+1)2}  result in
\begin{equation}
\label{eq:lem3result2}
t_{ij}(k+1) = c_{1i}(k+1)t_{ij}(k) \qquad (i, j) \in {\mathcal Z} 
\end{equation}
and
\begin{equation*}
\sum_{j=1}^n t_{ij}(k) c_{1i}(k+1) = \tilde{u}_i
\end{equation*}
respectively,
and
all other variables remain unchanged.
Hence
\begin{equation}
\label{eq:c1i02}
 c_{1i}(k+1)= \frac{\tilde{u}_i(k)}{\sum_{j=1}^n t_{ij}(k)}
\end{equation}

Similarily, 
considering  a column constraint  in \eqref{eq:colconstraints1} for $j \in \mathcal{J}$,
\eqref{eq:tij(k+1)2} and \eqref{eq:cl(k+1)2}  result in
 \begin{equation}
\label{eq:lem5result}
\begin{aligned}
t_{ij}(k+1) &= c_{2j}(k+1)  t_{ij}(k) \quad (i, j) \in {\mathcal Z}\\
\end{aligned}
\end{equation}
all other variables remain unchanged and
\begin{equation}
\label{eq:c2j1}
 c_{2j}(k+1)= \frac{\tilde{v}_j(k)}{\sum_{i=1}^m t_{ij}(k)}
\end{equation}

For  $i \notin \mathcal{I}$ let $c_{1i}$
be the 
scalar associated with the cost term
$\gamma_{1i} kl\left(\left. \sum_{j=1}^n  t_{ij} \right| \tilde{u}_i\right)$.
This is a special case of 
$\gamma_l kl\left( \sum_{(i,j) \in {\mathcal Z}} 
\left. a^l_{ij} t_{ij}\right|\tilde{b}^l  \right)$
with
$\tilde{b}^l$ and $a^l_{ij}$ given by \eqref{eq:bl2} and \eqref{eq:alij2}, respectively
So,
\eqref{eq:tij(k+1)1}  results in
\begin{equation}
\label{eq:lem4result}
\begin{aligned}
t_{ij}(k+1) &= c_{1i}(k+1)  t_{ij}(k) \qquad (i, j) \in {\mathcal Z}\\
u_i(k+1) &= c_{1i}(k+1)^{-\frac{1}{\gamma_{1i}} }u_i(k)	\\
\end{aligned}
\end{equation}
all other variables remain unchanged and  \eqref{eq:cl(k+1)1} implies that
\begin{equation*}
 \sum_{j=1}^nc_{1i}(k+1)t_{ij}(k) - c_{1i}(k+1)^{-1/\gamma_{1i} }u_i(k) =0
\end{equation*}
Hence
\begin{align}
\label{eq:c2j0}
c_{1i}(k+1)= \left(\frac{u_i(k)}{\sum_{j=1}^n t_{ij}(k)}\right)^\frac{\gamma_{1i}}{1+\gamma_{1i} }
\end{align}


%
%
Similarly, considering 
the cost term
$\gamma_{2j} kl\left(\left. \sum_{i=1}^m  t_{ij} \right| \tilde{v}_j\right)$ for $j \in \mathcal{J}$
\eqref{eq:tij(k+1)1} and \eqref{eq:cl(k+1)1}
result in
\begin{equation}
\label{eq:lem6result}
\begin{aligned}
t_{ij}(k+1) &= c_{2j}(k+1)  t_{ij}(k) \qquad (i,j) \in {\mathcal Z}\\
v_j(k+1) &= c_{2j}(k+1)^{-\frac{1}{\gamma_{2j}} }v_j(k)	\\
\end{aligned}
\end{equation}
all other variables remain unchanged and
\begin{align}
\label{eq:c2j2}
c_{2j}(k+1)= \left(\frac{v_j(k)}{\sum_{i=1}^m t_{ij}(k)}\right)^\frac{\gamma_{2j}}{1+\gamma_{2j} }
\end{align}

Algorithm 1 is obtained by performing \eqref{eq:lem3result2}-\eqref{eq:c2j2} in one step.

\section{Use-cases and applications}\label{sec:applications}
To illustrate the utility of our results we provide examples from two distinct domains: an example from the gig economy for the reallocation of resources under fairness constraints, and a resource-allocation example from energy markets designed to capitalise on consumers' flexibility.

\subsection{Equalising Utility across Groups of Drivers}
\begin{figure}
    \centering
    \includegraphics[width=0.85\linewidth]{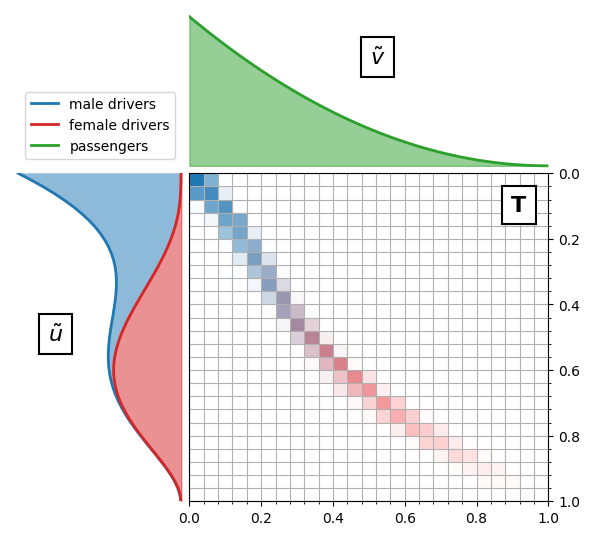}
    \caption{Consider the distribution of \textcolor{blue}{male} and \textcolor{red}{female} drivers in $\tilde{u}$, where male drivers are more likely to be close to the high-demand city centre ($x=0.0$). Naively allocating passengers to drivers in this setting would proliferate the gender pay gap. Instead, we constrain the allocation plan $T$ such that the subgroups have \textit{equal earning opportunity}.}
    \label{fig:problem_statement}
\end{figure}

We consider the problem, wherein taxi drivers must be dispatched from a set of city locations to meet passenger demand. Despite never discriminating directly based on drivers' gender, there remains an intrinsic pay gap based on factors including drivers' home locations \cite{cook2021gender}. Male drivers are more likely to be based closer to the city centre, and other lucrative high-crime neighbourhoods, whereas female drivers are more frequently based in quieter suburbs as shown in Figure \ref{fig:problem_statement}.

We model this problem in an OT framework where a distribution of drivers must be matched to a distribution of passengers. We simulate the distribution of male and female drivers as beta distributions $\tilde{u}_{i,\text{male}} \sim \mathcal{B}(1.0, 5.0)$ and $\tilde{u}_{i,\text{female}} \sim \mathcal{B}(4.0, 3.0)$ respectively, and the passenger demand is modelled $\tilde{v}_j \sim \mathcal{B}(1.0, 3.0)$ to reflect increased demand in the city centre. The proportion of female drivers at each aggregate location $x_i$ is therefore 
$w_i \equiv \frac{\tilde{u}_{i,\text{female}}}{\tilde{u}_{i}}$.

We define the transport cost $c_{ij}$ as the squared Euclidean distance from $x_i$ to $x_j$ to approximate a matching based on the nearest driver-passenger pairs, and regularise with $\gamma_0 \equiv 0.001$. We set $\gamma_{1,i} = \gamma_{2,j} = 10 $ for all $(i,j)$ to penalise unmet demand and under-utilised supply as a result of meeting the utility constraint.

If $s_j \in [5,20]$ is an   estimate of the mean fare from  pickup point
$j$ we can calculate the estimated utility for all male drivers and all female drivers and set them to be equal:

\begin{equation*}  
\sum_i w_i \sum_j s_{j} t_{ij} = \sum_i (1-w_i) \sum_j s_{j} t_{ij}
\end{equation*}
where $t_{ij}$ is the number of passengers from block $x_j$ transported by drivers from block $x_i$. We can rearrange this constraint into the form of (\ref{eq:GenConst}):
\begin{equation}
    \sum_{i} \sum_{j}(2 w_i - 1) s_j t_{ij} = 0
    \label{eq:fairness_constraint}
\end{equation}

Normalised log-convergence for the constrained algorithm is shown in Figure \ref{fig:convergence_matching}. Our method is able to effectively and efficiently trade off optimality against realising the constraint: without the utility constraint, the pay gap is as high as 22.5\%, but introducing constraint \eqref{eq:fairness_constraint} reduces this to effectively zero for a global cost of only 3.4\% higher.


\begin{figure}[h]
    \centering
    \includegraphics[width=0.75\linewidth]{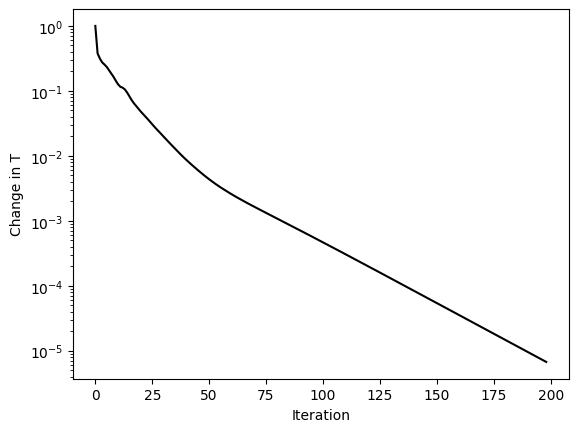}
    \caption{Convergence of $T(k)$ over 200 iterations of Algorithm 2}
    \label{fig:convergence_matching}
\end{figure}

\subsection{Exploiting Flexibility in Resource Allocation Problems}

Next, consider an energy market setting where we have some finite supply from different sources, and a total demand which exceeds the total supply ($\sum_i \tilde{u}_i < \sum_j \tilde{v}_j$). Suppose that some of our consumers \textit{must} have their demand exactly realised, but that others have some flexibility to be under- or over-supplied. Each flexible consumer has some \textit{flexibility cost}, i.e. the reward they have negotiated for each unit of flexibility that is realised under the optimal plan $T^*$. Clearly, the system is incentivised to minimise the total cost of the realised flexibility required by the plan. By setting the column penalty $\gamma_{2j}$ proportional to the cost of agent $j$'s flexibility, the objective reflects both the transport cost $\sum_{(i,j) \in \mathcal{Z}} c_{ij} t_{ij}$ and the flexibility cost $\sum_{j \notin \mathcal{J}} \gamma_{2j} \;kl (\sum_i t_{ij} | \tilde{v}_j)$.

To demonstrate this, we consider  $n=200$ suppliers whose capacities $\tilde{u}_i$ are  drawn iid from the normal distribution $\mathcal{N}(12.5, 2^2)$.
Additionally, we consider  $m=500$ consumers, whose ideal demands $\tilde{v}_j$
are  drawn iid from the normal distribution $\mathcal{N}(5.0, 1^2)$. The suppliers are completely inflexible, therefore $ \mathcal{I}  = \{1, 2, \cdots, n\}$. Of the consumers, 25\% are inflexible in their demand (consider, for example, factories or hospitals), and the remaining 75\% are assigned an iid flexibility cost $\gamma_{2j} \sim \mathcal{U}(2.5, 50.0)$. Additionally, we randomly draw $n+m = 700$ source-target pairs $(i, j) \notin \mathcal{Z}$ to simulate some infrastructure or delivery constraints. We set the regularisation parameter $\gamma_0$ to $0.01$, and the transport cost is drawn from the uniform distribution $[0,1]$.

\begin{figure}[h]
    \centering
    \includegraphics[width=0.8\linewidth]{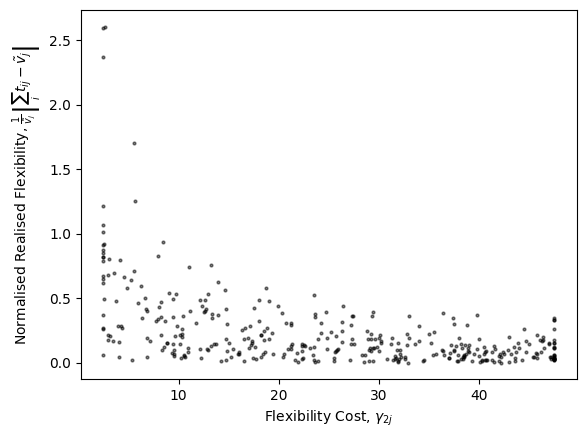}
    \caption{Realised flexibility $ \frac{1}{\tilde{v}_j} \left| \sum_i t_{ij} - \tilde{v}_j \right|$ against flexibility cost for consumers $j \notin \mathcal{J}$.}
    \label{fig:flexibility}
\end{figure}

Algorithm 4 is then applied until the plan $T(k)$ reaches convergence tolerance $1\times 10^{-12}$. The relationship between the flexibility cost, $\gamma_{2j}$, and the realised flexibility for each flexible consumer is shown in Figure \ref{fig:flexibility}: note that the maximum realised flexibility is inversely proportional to the flexibility cost, and as such the total cost of the flexibility in the plan is minimised. This demonstrates the algorithm's natural versatility to be applied to many different classes of problems.

\section{Conclusions}
\label{sec:conc}
We have presented a provably-convergent approach to solving optimal transport problems with linear constraints and selectively-relaxed marginal constraints. To the best of our knowledge, this is the first work which approaches the problem of selective constraint relaxation and provides convergence results. We demonstrate our algorithm's flexibility by applying it to two highly relevant problems in energy markets and the gig economy.

\bibliographystyle{abbrv}
\small
\bibliography{bibliography}



\end{document}